\definecolor{darkgreen}{rgb}{0.4,0.0,0.0}
\newtheorem{thm}{Theorem}[section]
\newtheorem{prop}[thm]{Proposition}
\newtheorem{lem}[thm]{Lemma}
\newtheorem{cor}[thm]{Corollary}
\def\XXint#1#2#3{{\setbox0=\hbox{$#1{#2#3}{\int}$ }
\vcenter{\hbox{$#2#3$ }}\kern-.6\wd0}}
\newcommand{\R}{\mathbb{R}}
\newcommand{\C}{\mathbb{C}}
\newcommand{\N}{\mathbb{N}}
\theoremstyle{definition}
\newtheorem{definition}[thm]{Definition}
\newcommand\restr[2]{{
  \left.\kern-\nulldelimiterspace 
  #1 
  \vphantom{\big|} 
  \right|_{#2} 
  }}
\theoremstyle{remark}
\theoremstyle{notation}
\numberwithin{equation}{section}
\newtheoremstyle{ser}
{8pt}
{8pt}
{\it}
{}
{\sf}
{:}
{6mm}
{}
\newtheoremstyle{serr}
{8pt}
{8pt}
{\normalfont}
{}
{\sf}
{.}
{6mm}
{}
\theoremstyle{ser}
\theoremstyle{serr}
\theoremstyle{ser}
\begin{document}

\title{A stronger form of Yamamoto's theorem on singular values}

\author{Soumyashant Nayak}
\address{Statistics and Mathematics Unit\\
Indian Statistical Institute\\
 8th Mile, Mysore Road\\
  RVCE Post, Bengaluru\\
   Karnataka - 560 059, India}
\email{soumyashant@isibang.ac.in}

\maketitle

\begin{abstract}
For a matrix $T \in M_m(\C)$, let $|T| : = \sqrt{T^*T}$. For $A \in M_m(\C)$, we show that the matrix sequence $\big\{ |A^n|^{\frac{1}{n}} \big\}_{n \in \N}$ converges to a positive-semidefinite matrix $H$ whose $j^{\textrm{th}}$-largest eigenvalue is equal to the $j^{\textrm{th}}$-largest eigenvalue-modulus of $A$ (for $1 \le j \le m$). In fact, we give an explicit description of the spectral projections of $H$ in terms of the eigenspaces of the diagonalizable part of $A$ in its Jordan-Chevalley decomposition.  This gives us a stronger form of Yamamoto's theorem which asserts that $\lim_{n \to \infty} s_j(A^n)^{\frac{1}{n}}$ is equal to the $j^{\textrm{th}}$-largest eigenvalue-modulus of $A$, where $s_j(A^n)$ denotes the $j^{\textrm{th}}$-largest singular value of $A^n$. Moreover, we also discuss applications to the asymptotic behaviour of the matrix exponential function, $t \mapsto e^{tA}$.

\bigskip\noindent
{\bf Keywords:}
Singular values, Yamamoto's theorem, spectral-radius formula, matrix exponential function
\vskip 0.01in \noindent
{\bf MSC2010 subject classification:} 15A60, 15A90, 47D06
\end{abstract}

\section{Introduction}

The well-known spectral radius formula for a matrix $A$, $$\rho(A) = \lim_{n \to \infty} \|A^n\|^{\frac{1}{n}},$$ provides insight into the asymptotic behaviour of powers of matrices. Building upon the work of Gautschi (see \cite{gautschi1}, \cite{gautschi2}), Yamamoto considerably refined this result by proving the following theorem.

\vskip 0.1in
\noindent {\bf Yamamoto's theorem } (see \cite[Theorem 1]{yamamoto})
\textsl{Let $A$ be a matrix in $ M_m(\C)$ and $|\lambda_j|(A)$ denote the $j^{\textrm{th}}$-largest number in the list of modulus of eigenvalues of $A$ (counted with multiplicity). Then $$\lim_{n \to \infty} s_j(A^n)^{\frac{1}{n}} = |\lambda_j|(A),$$ where $s_j(A^n)$ denotes the $j^{\textrm{th}}$-largest singular value of $A^n$.
}
\vskip 0.1in
Note that the spectral radius formula corresponds to the case $j = 1$ as $s_1(T) = \|T\|$ for any $T \in M_m(\C)$. In \cite{mathias}, Mathias provides an elegant proof of the above-mentioned result using the interlacing properties of singular values for principal diagonal blocks of a matrix. In \cite{tam-huang}, Tam and Huang generalize the result to the context of real semisimple Lie groups, with the original result corresponding to the case of $SL_n(\C)$.

For an operator $T$ acting on a Hilbert space, we use the notation $|T| := \sqrt{T^*T}$. In this article, our main goal is to prove a stronger form of Yamamoto's theorem by showing the convergence of the matrix sequence $\{ |A^n|^{\frac{1}{n}} \}_{n \in \N}$ for every $m \times m$ complex matrix $A$.
\vskip 0.1in
\noindent {\bf The Main Result } (see Theorem \ref{thm:main})
\textsl{
Let $A \in M_m(\C)$ and $\{ a_1, \ldots, a_k \}$ be the set of modulus of eigenvalues of $A$ such that $0 \le a_1 < a_2 < \cdots < a_k$. Let $A = D + N$ be the Jordan-Chevalley decomposition of $A$ into its commuting diagonalizable and nilpotent parts ($D, N$, respectively). For $1 \le j \le k$, let $E_j$ be the orthogonal projection onto the subspace of $\C ^m$ spanned by the eigenvectors of $D$ corresponding to eigenvalues with modulus less than or equal to $a_j$, and set $E_0 := 0$. Then the following assertions hold:
\begin{itemize}
\item[(i)] The sequence $\{ |A^n|^{\frac{1}{n}} \}_{n \in \N}$ converges to the positive-semidefinite matrix $\sum_{i=1}^k a_j (E_j - E_{j-1}).$ 
\item[(ii)] A non-zero vector $\vec{x} \in \C ^m$ is in $\mathrm{ran}(E_j) \backslash \mathrm{ran}(E_{j-1})$ if and only if $\lim_{n \to \infty} \|A^n \vec{x}\|^{\frac{1}{n}} = a_j.$ 
\item[(iii)] The set $\mathrm{ran}(E_j) \backslash \mathrm{ran}(E_{j-1})$ is invariant under the action of $A^k$ for every $k \in \N$.
\end{itemize}
}
\vskip 0.1in

Since the solution of the system of coupled ordinary differential equations, $$\frac{d\vec{X}(t)}{dt} = A\vec{X}(t), \; \vec{X} : \R \to \C ^m,$$
is given by $\vec{X}(t) = e^{At} \vec{X}(0)$, the asymptotic behaviour of the matrix exponential function, $t \mapsto e^{tA}$, has traditionally been of great interest. In \S \ref{sec:app}, we make some novel observations in this regard (cf.\ \cite[Chapter 4]{batkai_fijavz_rhandi}). Noting that the diagonalizable part of $e^A$ is $e^D$, as a corollary of Theorem \ref{thm:main}, we show that $\lim_{t \to \infty} |e^{tA}|^{\frac{1}{t}}$ exists and provide an explicit description of the spectral projections of the limit (see Theorem \ref{thm:mat_exp}). Furthermore, we show that $\lim_{t \to \infty} \|\vec{X}(t) \|^{\frac{1}{t}}$ exists and compute its value, which provides precise information about the growth of the norm of the solution vector, $\|\vec{X}(t) \|$, as $t \to \infty$. This strengthens Theorem 4.5-(a) in \cite{batkai_fijavz_rhandi}.

Note that the Jordan-Chevalley decomposition of $A$ is an algebraic fact and does {\bf not} care about the inner product on $\C ^m$ where as the adjoint operation is intimately connected with the inner product (and thereby, the Hilbert space structure). Our result shows that the asymptotic behaviour of $\{ |A^n|^{\frac{1}{n}} \}_{n \in \N}$ (and $|e^{tA}|^{\frac{1}{t}}$) is dictated by the algebraic properties of $A$. 

Let $\mathscr{M}$ be a type $II_1$ von Neumann factor. In \cite{haagerup-schultz}, using tools from free probability theory and ultrapower techniques, it was proved by Haagerup and Schultz that the sequence $\{ |T^n|^{\frac{1}{n}} \}_{n \in \N}$ converges in the strong-operator topology (and being a norm-bounded sequence, in the ultra-strong topology). Furthermore, an elementary example (due to Voiculescu, see \cite[Example 8.4]{haagerup-schultz}) is given of a weighted shift operator $S$ on an infinite-dimensional Hilbert space such that the sequence $\{ |S^n|^{\frac{1}{n}}\}_{n \in \N}$ does {\bf not} converge in the strong-operator topology. It is no surprise that the matrix case (which corresponds to finite-dimensional type $I$ factors) affords substantial simplifications and as mentioned above, we are able to obtain precise information about the limit matrix in terms of the diagonalizable part of the Jordan-Chevalley decomposition of $A$. 

\section{Preparatory results}
In this section, we organize some preparatory results on sequences of real numbers, and singular values of matrices, en route to Theorem \ref{thm:main}. First we compile a list of notation used in this article for the reader's quick reference.
\vskip 0.2in

\newpage
\noindent {\bf Notation:} 
\begin{itemize}
\item We use the standard notation $\N, \R, \C$, respectively, to denote the set of natural numbers, real numbers, complex numbers, respectively. 
\item The real part of a complex number $\lambda$ is denoted by $\Re \lambda$. 
\item For a matrix $T \in M_m(\C)$, we denote its range in $\C ^m$ by $\mathrm{ran}(T)$. The multiset of eigenvalues of $T$ is denoted by $\lambda (T)$, and the multiset of modulus of eigenvalues of $T$ is denoted by $|\lambda|(T)$. The $j^{\textrm{th}}$-largest singular value of $T$ is denoted by $s_j (T)$, and the $j^{\textrm{th}}$-largest element of $|\lambda|(T)$ is denoted by $|\lambda _j|(T)$. 

\noindent (A multiset is a collection of objects in which elements may occur more than once but finitely many times, that is, a set with a finite multiplicity function for each of its elements. The underlying set of a multiset is said to be its {\it support}. See \cite{blizard} for a quick introduction to multiset theory.)
\end{itemize}

\subsection{Some elementary results on sequences of real numbers}
\begin{lem}
\label{lem:seq1}
\textsl{
Let $k$ be a fixed positive integer. Then $$\lim_{n \to \infty} \sqrt[n]{\binom{n}{k} } = 1.$$
}
\end{lem}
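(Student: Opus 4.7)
The plan is to sandwich $\binom{n}{k}$ between two quantities whose $n$-th roots both tend to $1$, then apply the squeeze theorem.

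First, I would fix $k$ and note that for all $n \ge k$, we have the elementary bounds
\[
1 \;\le\; \binom{n}{k} \;=\; \frac{n(n-1)\cdots(n-k+1)}{k!} \;\le\; n^k.
\]
The lower bound holds because $\binom{n}{k}$ is a positive integer for $n \ge k$, and the upper bound holds because each of the $k$ factors in the numerator is at most $n$, and we drop the $k!$ in the denominator.

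Next, I would take $n$-th roots throughout to obtain
\[
1 \;\le\; \sqrt[n]{\binom{n}{k}} \;\le\; \bigl(\sqrt[n]{n}\bigr)^{k}.
\]
The key standard fact to invoke is $\lim_{n \to \infty} \sqrt[n]{n} = 1$, which follows, for instance, from writing $\sqrt[n]{n} = e^{(\log n)/n}$ and observing that $(\log n)/n \to 0$. Since $k$ is fixed, raising to the $k$-th power preserves the limit, so the right-hand side tends to $1$ as $n \to \infty$. The squeeze theorem then forces $\sqrt[n]{\binom{n}{k}} \to 1$.

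There is no substantive obstacle here; the only point worth handling with care is making sure the upper bound is genuinely polynomial in $n$ (so that $n$-th roots tame it) and that $k$ is held fixed throughout, which is exactly the hypothesis. The argument is self-contained and uses only the elementary limit $\sqrt[n]{n} \to 1$.
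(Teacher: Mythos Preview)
Your proof is correct and takes essentially the same approach as the paper: both exploit that $\binom{n}{k}$ is a product of $k$ linear factors in $n$ divided by $k!$, and both ultimately rest on the elementary limit $\sqrt[n]{n}\to 1$. The only cosmetic difference is that the paper takes the $n$-th root of each factor separately and multiplies the limits, whereas you bound the whole expression by $n^k$ and apply the squeeze theorem; neither gains anything substantive over the other.
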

\begin{proof}
Note that $\lim_{n \to \infty} \sqrt[n]{|n-a|} = 1$ and $\lim_{n \to \infty} \sqrt[n]{a} = 1$ for all $a > 0$. Thus 
 $$\lim_{n \to \infty} \sqrt[n]{\binom{n}{k} }  = \lim_{n \to \infty} \sqrt[n]{n} \sqrt[n]{|n-1|} \cdots \sqrt[n]{|n-k+1|}/\sqrt[n]{k!} = 1.$$
\end{proof}

\begin{lem}
\label{lem:limsup_rad}
\textsl{
Let $\{ a_{1, n} \}_{n \in \N}, \ldots, \{ a_{k, n} \}_{n \in \N}$ be $k$-many sequences of non-negative real numbers and let $b_n := \sum_{i=1}^k a_{i, n}$. Then
$$\limsup_{n} \sqrt[n]{b_n} \le \max_{1 \le i \le k} \big\{ \limsup_n \sqrt[n]{a_{i, n}} \big\}.$$
}
\end{lem}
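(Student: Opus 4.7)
The plan is to reduce the sum inside the $n$-th root to a maximum, since taking $n$-th roots turns sums of $k$ terms into something controlled by the largest term up to a factor of $\sqrt[n]{k}$, which tends to $1$. Concretely, I would first record the pointwise estimate
$$b_n \;=\; \sum_{i=1}^k a_{i,n} \;\le\; k \cdot \max_{1 \le i \le k} a_{i,n},$$
so that, taking $n$-th roots (all quantities are non-negative),
$$\sqrt[n]{b_n} \;\le\; \sqrt[n]{k} \cdot \max_{1 \le i \le k} \sqrt[n]{a_{i,n}}.$$
Passing to the limsup on both sides and using $\lim_n \sqrt[n]{k} = 1$, I obtain
$$\limsup_n \sqrt[n]{b_n} \;\le\; \limsup_n \max_{1 \le i \le k} \sqrt[n]{a_{i,n}}.$$

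The remaining step is the interchange of $\limsup_n$ and $\max_{1 \le i \le k}$, which holds precisely because the maximum is over a \emph{finite} index set. I would prove the inequality
$$\limsup_n \max_{1 \le i \le k} \sqrt[n]{a_{i,n}} \;\le\; \max_{1 \le i \le k} \limsup_n \sqrt[n]{a_{i,n}}$$
by a standard $\ep$-argument: set $M := \max_{1 \le i \le k} \limsup_n \sqrt[n]{a_{i,n}}$ and fix $\ep > 0$. For each $i \in \{1,\ldots,k\}$, the definition of $\limsup$ gives an index $N_i$ such that $\sqrt[n]{a_{i,n}} < M + \ep$ for all $n \ge N_i$. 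Letting $N := \max_{1 \le i \le k} N_i$, for every $n \ge N$ we have $\max_{1 \le i \le k} \sqrt[n]{a_{i,n}} < M + \ep$, hence $\limsup_n \max_{1 \le i \le k} \sqrt[n]{a_{i,n}} \le M + \ep$. Letting $\ep \downarrow 0$ yields the desired bound, and combining it with the previous display completes the proof.

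There is no real obstacle here; this is a routine limsup manipulation. The only mild subtlety is ensuring that the maximum and the $n$-th root interact correctly when some $a_{i,n}$ vanish, but since $\sqrt[n]{0} = 0$ this causes no issue, and the monotonicity of $t \mapsto \sqrt[n]{t}$ on $[0,\infty)$ makes every step go through without sign considerations.
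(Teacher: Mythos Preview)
Your argument is correct and entirely self-contained. The paper takes a different, slightly slicker route: it interprets each $\limsup_n \sqrt[n]{a_{i,n}}$ as the reciprocal of the radius of convergence $R_i$ of the power series $\sum_n a_{i,n} z^n$, observes that the sum series $\sum_n b_n z^n$ converges on the common disc of radius $\min_i R_i$, and then takes reciprocals. This packages your $\ep$-argument into the Cauchy--Hadamard formula, avoiding any explicit estimation. Your approach, by contrast, is more elementary---it requires nothing beyond the definition of $\limsup$ and the trivial bound $b_n \le k \max_i a_{i,n}$---and makes the role of the finiteness of $k$ completely transparent (it is needed both for the factor $\sqrt[n]{k} \to 1$ and for choosing a common threshold $N = \max_i N_i$). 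Either argument is perfectly adequate here; the paper's version is a nice mnemonic, while yours would be preferable in a context where one does not want to invoke complex-analytic language.
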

\begin{proof}
We may assume that $\limsup_n \sqrt[n]{a_{i, n}} < \infty$ for all $1 \le i \le k$, as otherwise there is nothing to prove. Consider the power series $p_i(z) \equiv \sum a_{i, n} z^n$ with radius of convergence, $$R_i = \frac{1}{\limsup_{n} \sqrt[n]{a_{i, n}} } > 0.$$

Clearly, the power series $p(z) \equiv \sum b_n z^n \equiv \sum_{i=1}^k p_i(z)$ converges in the open ball of radius $\min_{1 \le i \le k} \{ R_i \} > 0$, centred at the origin. Thus the radius of convergence of the power series $p$, $$R = \frac{1}{\limsup_n \sqrt[n]{b_n} },$$ is greater than or equal to $\min_{1 \le i \le k} \{ R_i \}$. Taking reciprocals, we get the desired result.
\end{proof}

\begin{lem}
\label{lem:max_comb}
\textsl{
Let $0 \le a_1 <  \ldots < a_m$ and $t_1, t_2, \ldots, t_m \in [0, \infty)$ with $t_m \ne 0$. Then $\lim_{n \to \infty} \big( \sum_{i=1}^m t_i a_i ^n \big)^{\frac{1}{n}} = a_m$.
}
\end{lem}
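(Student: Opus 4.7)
The plan is to prove the limit by a squeeze argument, establishing matching upper and lower bounds on $\big(\sum_{i=1}^m t_i a_i^n\big)^{1/n}$, both of which tend to $a_m$.

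For the lower bound, I would drop all but the $i = m$ term. Since all $t_i a_i^n \ge 0$, we have
\[
\sum_{i=1}^m t_i a_i^n \; \ge \; t_m a_m^n,
\]
and hence $\big(\sum_{i=1}^m t_i a_i^n\big)^{1/n} \ge t_m^{1/n} a_m$. Since $t_m > 0$ by hypothesis, $t_m^{1/n} \to 1$, so the liminf of the left-hand side is at least $a_m$.

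For the upper bound, I would invoke Lemma \ref{lem:limsup_rad} applied to the sequences $a_{i,n} := t_i a_i^n$. For each $i$, either $t_i = 0$ (in which case $\limsup_n \sqrt[n]{a_{i,n}} = 0$) or $t_i > 0$ (in which case $\sqrt[n]{a_{i,n}} = t_i^{1/n} a_i \to a_i$). Since $a_m$ is the largest of the $a_i$'s, Lemma \ref{lem:limsup_rad} gives
\[
\limsup_{n \to \infty} \Bigl( \sum_{i=1}^m t_i a_i^n \Bigr)^{1/n} \; \le \; \max_{1 \le i \le m} a_i \; = \; a_m.
\]
Combined with the lower bound, the limit exists and equals $a_m$.

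I do not anticipate a main obstacle; the statement is essentially the observation that the largest base dominates a sum of geometric sequences. As an alternative to Lemma \ref{lem:limsup_rad}, one could obtain the upper bound even more directly via the crude estimate $\sum_{i=1}^m t_i a_i^n \le \bigl(\sum_{i=1}^m t_i\bigr) a_m^n$, yielding $\big(\sum_{i=1}^m t_i a_i^n\big)^{1/n} \le \bigl(\sum_{i=1}^m t_i\bigr)^{1/n} a_m \to a_m$, but invoking the previously established Lemma \ref{lem:limsup_rad} keeps the exposition uniform with the surrounding material.
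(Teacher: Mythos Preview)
Your proof is correct and follows essentially the same squeeze argument as the paper: both use the identical lower bound $\sum_{i=1}^m t_i a_i^n \ge t_m a_m^n$, and for the upper bound the paper opts for the direct estimate $\sum_{i=1}^m t_i a_i^n \le mCt_m\,a_m^n$ with $C = \max_i t_i/t_m$, which is precisely the ``crude estimate'' alternative you mention (with a slightly different constant) rather than invoking Lemma~\ref{lem:limsup_rad}.
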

\begin{proof}
Let $C := \max_{1 \le i \le m} \{ \frac{t_i}{t_m} \} \ge 1$. Note that $t_m a_m ^n \le \sum_{i=1}^m t_i a_i ^n \le m C t_m a_m ^n,$ so that $$a_m(t_m)^{\frac{1}{n}} \le (\sum_{i=1}^m t_i a_i ^n)^{\frac{1}{n}} \le a_m (mCt_m)^{\frac{1}{n}}, \;\;\;\forall n \in \N.$$
The assertion follows from the fact that $\lim_{n \to \infty} t_m ^{\frac{1}{n}} = \lim_{n \to \infty} (mCt_m)^{\frac{1}{n}} = 1$ and the sandwich lemma.
\end{proof}

\subsection{Singular values of matrices}
The usual matrix norm is denoted by $\|\cdot\|$. Since $M_m(\C)$ is a finite-dimensional normed linear space, all norms are equivalent, and the notion of norm-convergence used is immaterial. Most of the results in this subsection follow from standard techniques discussed in the masterful account of the subject of matrix analysis in \cite{bhatia_ma}.
\begin{lem}
\label{lem:sing_conv}
\textsl{
Let $\{ H_n \}_{n \in \N}$ be a sequence of Hermitian matrices in $M_m(\C)$ converging to $H$ in norm. Then $s_j(H_n) \to s_j(H)$, where $s_j(\cdot)$ denotes the $j^{\textrm{th}}$ singular value for $1 \le j \le m$.
}
\end{lem}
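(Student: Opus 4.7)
The plan is to reduce the claim to the standard fact that the eigenvalues of a Hermitian matrix depend continuously (in fact, Lipschitz-continuously) on the matrix. The key observation is that for a Hermitian $T \in M_m(\C)$, one has $T^*T = T^2$, so $|T| = \sqrt{T^2}$ has eigenvalues $\{|\mu| : \mu \in \lambda(T)\}$ (counted with multiplicity). Hence the singular values $s_j(T)$ are exactly the moduli of the eigenvalues of $T$ arranged in decreasing order, i.e., $s_j(T) = |\lambda_j|(T)$.

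The main ingredient I would then invoke is Weyl's perturbation theorem for Hermitian matrices (see \cite{bhatia_ma}, Corollary III.2.6): if $A, B \in M_m(\C)$ are Hermitian and $\lambda_1^{\downarrow}(T) \ge \cdots \ge \lambda_m^{\downarrow}(T)$ denotes the eigenvalues of $T$ in decreasing order (as real numbers, not by modulus), then
\[
\max_{1 \le j \le m} \bigl| \lambda_j^{\downarrow}(A) - \lambda_j^{\downarrow}(B) \bigr| \;\le\; \|A - B\|.
\]
Since all norms on $M_m(\C)$ are equivalent, applying this with $A = H_n$ and $B = H$ yields $\lambda_j^{\downarrow}(H_n) \to \lambda_j^{\downarrow}(H)$ as $n \to \infty$ for every $1 \le j \le m$.

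To finish, I would note that the map $\sigma : \R^m \to \R^m$ sending $(x_1, \ldots, x_m)$ to the decreasing rearrangement of $(|x_1|, \ldots, |x_m|)$ is continuous, being the composition of the coordinate-wise absolute value (continuous) with the sorting operation (continuous on $\R^m$). By the opening observation, applying $\sigma$ to the vector $(\lambda_1^{\downarrow}(H_n), \ldots, \lambda_m^{\downarrow}(H_n))$ produces $(s_1(H_n), \ldots, s_m(H_n))$, and likewise for $H$. Continuity of $\sigma$ together with the convergence established in the previous step gives $s_j(H_n) \to s_j(H)$ for each $j$.

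There is no substantial obstacle here; the entire argument is already packaged in the Lipschitz continuity of Hermitian eigenvalues with respect to the operator norm. The only mild bookkeeping point is the distinction between sorting eigenvalues in decreasing real order versus sorting their moduli in decreasing order, which is cleanly handled by the continuous map $\sigma$ above.
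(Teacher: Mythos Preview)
Your proof is correct and rests on the same idea as the paper's: Weyl-type perturbation bounds from \cite{bhatia_ma}. The paper is slightly more direct, invoking the singular-value form of the inequality (Theorem~VI.2.1 in \cite{bhatia_ma}), which immediately gives $|s_j(H_n)-s_j(H)|\le\|H_n-H\|$ without any need for the Hermitian hypothesis or your detour through eigenvalues and the sorting map~$\sigma$.
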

\begin{proof}
Applying Weyl's perturbation theorem (see \cite[Theorem VI.2.1]{bhatia_ma}), we have $$|s_j(H_n) - s_j(H) | \le \|H_n - H\|,$$ which proves the assertion.
\end{proof}

\begin{prop}
\label{prop:dom}
\textsl{
Let $A \in M_m(\C)$ with eigenvalues $\lambda_1, \ldots, \lambda_m$ (counted with multiplicity). Then for $0 \le p < \infty$, we have $$\sum_{i=1}^m |\lambda_i|^p \le \mathrm{tr}(|A|^p).$$ 
}
\end{prop}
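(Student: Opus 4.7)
The plan is to observe that $|A|$ is positive-semidefinite with eigenvalues equal to the singular values $s_1(A), \ldots, s_m(A)$, so by the spectral theorem $\mathrm{tr}(|A|^p) = \sum_{i=1}^m s_i(A)^p$. The proposition then amounts to the weak majorization $\sum_{i=1}^m |\lambda_i|^p \le \sum_{i=1}^m s_i(A)^p$, which is classical (Weyl's majorant theorem). The case $p = 0$ (under the convention $0^0 = 0$) reduces to the observation that the number of non-zero eigenvalues of $A$ is bounded above by its rank: the sum of the generalized eigenspaces corresponding to non-zero eigenvalues sits inside $\mathrm{ran}(A)$, and the rank of $A$ equals the number of non-zero singular values.

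For $p > 0$, the core step I would first establish is Weyl's log-majorization: ordering $|\lambda_1| \ge \cdots \ge |\lambda_m|$ and $s_1(A) \ge \cdots \ge s_m(A)$, one has
$$\prod_{i=1}^k |\lambda_i| \le \prod_{i=1}^k s_i(A), \quad 1 \le k \le m,$$
with equality at $k = m$ (both sides equal $|\det A|$). The standard route proceeds through exterior powers: writing $A = UTU^*$ via Schur's unitary triangularization with $T$ upper-triangular carrying the $\lambda_i$ on the diagonal, the $k$-th exterior power $\wedge^k A$ has eigenvalues $\{\lambda_{i_1} \cdots \lambda_{i_k} : i_1 < \cdots < i_k\}$ and singular values $\{s_{i_1}(A) \cdots s_{i_k}(A) : i_1 < \cdots < i_k\}$, so the spectral-radius bound $\rho(\wedge^k A) \le \|\wedge^k A\|$ yields the required inequality on top products. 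The passage from log-majorization to the weak majorization of $p$-th powers is then the Hardy-Littlewood-P\'{o}lya principle: since $t \mapsto e^{pt}$ is convex and non-decreasing on $\R$, log-majorization with endpoint equality implies weak majorization of the $p$-th powers after summation.

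The main obstacle I anticipate is expository rather than mathematical: the argument threads together several well-known but non-trivial ingredients (Schur triangularization, exterior powers, the spectral-radius inequality, and the convex-function consequence of log-majorization), each of which is available in \cite{bhatia_ma}. I expect the cleanest presentation in the paper's style to consist of a brief sketch of Weyl's log-majorization followed by a citation for the convex-function reduction, rather than a fully self-contained derivation.
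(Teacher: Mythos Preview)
Your proposal is correct and follows essentially the same route as the paper: both reduce the inequality to $\sum_i |\lambda_i|^p \le \sum_i s_i(A)^p$ via the identification $\mathrm{tr}(|A|^p) = \sum_i s_i(A)^p$ and then invoke Weyl's majorant theorem. The paper's proof is even terser than you anticipated --- it cites \cite[Theorem II.3.6]{bhatia_ma} directly without sketching the log-majorization or exterior-power argument, and does not separately address the $p=0$ edge case.
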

\begin{proof}
For a positive-semidefinite matrix $H$, the notions of singular value and eigenvalue coincide. Furthermore, we have $s_j(H^p) = s_j(H)^p$ for all $0 < p < \infty$. Thus $\mathrm{tr}(|A|^p) = \sum_{i=1}^m s_i(|A|^p) = \sum_{i=1}^m s_i(A)^p$. The assertion follows from Weyl's majorant theorem (see \cite[Theorem II.3.6]{bhatia_ma}).
\end{proof}

\begin{prop}
\label{prop:holder}
\textsl{
\begin{itemize}
    \item[(i)] (Generalized H\"{o}lder's inequality) Let $A_1, A_2, \ldots, A_k \in M_m(\C)$, and $r, p_1, \cdots, p_k \in (0, \infty)$ be such that $\sum_{i=1}^k \frac{1}{p_i} = \frac{1}{r}$. Then $$\mathrm{tr} \big( |\prod_{i=1}^k A_i|^r \big)^{\frac{1}{r}} \le \prod_{i=1}^k \mathrm{tr}  \big( |A_i|^{p_i} \big)^{\frac{1}{p_i}}.$$
    \item[(ii)] Let $A, B, C \in M_m(\C)$. For every $p \in (0, \infty)$, we have $$\mathrm{tr}(|ABC|^p) \le \|A\|^p \|C\|^p \mathrm{tr}((|B|^p)$$
\end{itemize}
}
\end{prop}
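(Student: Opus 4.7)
The plan is to view both parts through the lens of the Schatten quasi-norm
$$\|T\|_p := \mathrm{tr}(|T|^p)^{1/p} = \bigg(\sum_{j=1}^m s_j(T)^p\bigg)^{1/p}, \qquad p \in (0, \infty),$$
so that (i) becomes the multiplicative estimate $\|\prod_{i=1}^k A_i\|_r \le \prod_{i=1}^k \|A_i\|_{p_i}$. For part (i), I would reduce to the two-matrix case by induction on $k$: granting $\|AB\|_r \le \|A\|_p \|B\|_q$ whenever $\tfrac{1}{r} = \tfrac{1}{p} + \tfrac{1}{q}$, split the product into $A_1$ and $\prod_{i=2}^k A_i$, invoke the two-matrix case with an effective exponent $\tilde p$ satisfying $\tfrac{1}{\tilde p} = \sum_{i=2}^k \tfrac{1}{p_i}$, and apply the inductive hypothesis to $\prod_{i=2}^k A_i$.

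The two-matrix case rests on Horn's multiplicative singular-value inequality (see \cite[Theorem II.3.6]{bhatia_ma}),
$$\prod_{j=1}^l s_j(AB) \le \prod_{j=1}^l s_j(A) s_j(B), \qquad 1 \le l \le m,$$
i.e., $(s_j(AB))$ is log-majorized by $(s_j(A) s_j(B))$. Since $t \mapsto e^{rt}$ is convex and increasing, Weyl's majorant theorem upgrades this to the weak majorization $\sum_j s_j(AB)^r \le \sum_j s_j(A)^r s_j(B)^r$. Applying the scalar H\"older inequality on the right with conjugate exponents $p/r$ and $q/r$ (both $\ge 1$ since $\tfrac{1}{r} = \tfrac{1}{p} + \tfrac{1}{q}$ forces $p, q > r$) gives
$$\sum_{j=1}^m s_j(A)^r s_j(B)^r \le \bigg(\sum_{j=1}^m s_j(A)^p\bigg)^{r/p} \bigg(\sum_{j=1}^m s_j(B)^q\bigg)^{r/q} = \|A\|_p^r \|B\|_q^r,$$
and taking $r$-th roots closes the two-matrix case.

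Part (ii) is shorter: I would use that left or right multiplication by a matrix scales each singular value by at most the operator norm, $s_j(TX) \le \|T\| s_j(X)$ and $s_j(XT) \le \|T\| s_j(X)$, both immediate from the min-max characterization of singular values. Applied twice,
$$s_j(ABC) \le \|A\| \, s_j(BC) \le \|A\| \|C\| \, s_j(B).$$
Raising to the $p$-th power, summing $j = 1, \ldots, m$, and using $\mathrm{tr}(|X|^p) = \sum_j s_j(X)^p$ yields the asserted bound.

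The only real subtlety is the passage from log-majorization to weak $r$-majorization for \emph{all} $r > 0$ (not merely $r \ge 1$, where $\|\cdot\|_r$ is a genuine norm and the classical Schatten H\"older theorem can be quoted wholesale); this, however, is precisely the content of Weyl's majorant theorem applied to the convex increasing function $\exp(r \cdot)$, so it is directly at our disposal.
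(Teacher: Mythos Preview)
Your proposal is correct and follows essentially the same route as the paper. For part (i) both arguments reduce to $k=2$ by induction, invoke the log-majorization $\big(\log s_j(AB)\big) \prec \big(\log s_j(A)s_j(B)\big)$, upgrade it to $\sum_j s_j(AB)^r \le \sum_j s_j(A)^r s_j(B)^r$ via the convexity of $t \mapsto e^{rt}$, and finish with scalar H\"older; the only discrepancy is a reference slip (in \cite{bhatia_ma}, Theorem II.3.6 is Weyl's majorant theorem, while the Horn-type product inequality you state is (III.19)). For part (ii) you take a marginally more direct path, using the pointwise bound $s_j(TX) \le \|T\|\,s_j(X)$ from min--max rather than re-invoking the majorization inequality as the paper does, but the substance is the same.
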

\begin{proof}
Let $A, B \in M_m(\C)$. From (\cite[(III.19)]{bhatia_ma}), we have the following majorization inequality, $$\Big( \log s_1(AB), \ldots, \log s_m(AB) \Big) \prec \Big( \log \big( s_1(A) s_1(B) \big), \ldots, \log \big( s_m(A) s_m(B) \big) \Big).$$
From \cite[Example II.3.5 (v)]{bhatia_ma}, for every $\varphi : [0, \infty) \to \R$ such that $\varphi(e^t)$ is convex and monotone increasing in $t$, we have 
\begin{equation}
\label{ineq:maj_fun}
\sum_{i=1}^m \varphi \big( s_i(AB) \big) \le \sum_{i=1}^m \varphi \big( s_i(A) s_i(B) \big).
\end{equation}

\noindent (i) We prove the result for $k=2$. The general case follows from a standard induction argument. Let $p, q, r \in (0, \infty)$ be such that $\frac{1}{p} + \frac{1}{q} = \frac{1}{r}$. Using inequality (\ref{ineq:maj_fun}) for the function $t \mapsto t^r$, we have 
\begin{align*}
\mathrm{tr}(|AB|^r) = \sum_{i=1}^m s_i(AB)^r &\le \sum_{i=1}^m  s_i(A)^r s_i(B)^r \\
&\le \big( \sum_{i=1}^m s_i(A)^p)^{\frac{r}{p}} \big( \sum_{i=1}^m s_i(B)^q)^{\frac{r}{q}}\\
&= \big( \mathrm{tr}(|A|^p) \big)^{\frac{r}{p}} \big( \mathrm{tr} (|B|^q) \big)^{\frac{r}{q}},
\end{align*}
where the second inequality follows from H\"{o}lder's inequality for non-negative real numbers.
\vskip 0.1in

\noindent (ii) As in part (i), for any two matrices $A, B \in M_m(\C)$ and $p \in (0, \infty)$, we have 
\begin{align*}
\mathrm{tr}(|AB|^p) \le \sum_{i=1}^m  s_i(A)^p s_i(B)^p &\le \sum_{i=1}^m  s_1(A)^p s_i(B)^p \\\
&= \|A\|^p \big( \sum_{i=1}^m s_i(B)^p \big) = \|A\|^p \mathrm{tr}(|B|^p).
\end{align*}
Similarly $\mathrm{tr}(|BA|^p) \le \|A\|^p \mathrm{tr}(|B|^p)$. Thus $$\mathrm{tr}(|ABC|^p) \le \|A\|^p \mathrm{tr}(|BC|^p) \le \|A\|^p\|C\|^p \mathrm{tr}(|B|^p).$$
\end{proof}

\begin{cor}
\label{cor:holder}
\textsl{
Let $A$ be a matrix in $M_m(\C)$. For all $n \in \N$ and $p \in (0, \infty)$, we have $$\mathrm{tr}(|A^n|^{\frac{p}{n}}) \le \mathrm{tr}(|A|^p).$$ 
}
\end{cor}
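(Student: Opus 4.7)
The statement is a direct specialization of the generalized H\"older inequality in Proposition \ref{prop:holder}(i). My plan is to apply it to the $n$-fold product $A^n = A \cdot A \cdots A$ with equal exponents.

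Concretely, I would take $k = n$ factors, set $A_1 = A_2 = \cdots = A_n = A$, choose $p_1 = p_2 = \cdots = p_n = p$, and let $r := p/n$. Then the hypothesis $\sum_{i=1}^n \tfrac{1}{p_i} = \tfrac{n}{p} = \tfrac{1}{r}$ is satisfied, so Proposition \ref{prop:holder}(i) yields
\[
\mathrm{tr}\bigl(|A^n|^{p/n}\bigr)^{n/p} \;\le\; \prod_{i=1}^n \mathrm{tr}\bigl(|A|^p\bigr)^{1/p} \;=\; \mathrm{tr}\bigl(|A|^p\bigr)^{n/p}.
\]
Raising both sides to the positive power $p/n$ preserves the inequality and gives the desired bound $\mathrm{tr}(|A^n|^{p/n}) \le \mathrm{tr}(|A|^p)$.

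There is no real obstacle here; the only thing to check is that the exponents are in the admissible range $(0,\infty)$ so that Proposition \ref{prop:holder}(i) applies, which is immediate from $n \in \N$ and $p > 0$. The proof is essentially a one-liner once the generalized H\"older inequality is in hand.
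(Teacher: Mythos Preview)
Your proof is correct and follows exactly the same approach as the paper: apply Proposition~\ref{prop:holder}(i) with $A_1=\cdots=A_n=A$, $p_1=\cdots=p_n=p$, and $r=p/n$.
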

\begin{proof}
In Proposition \ref{prop:holder}, set $A_1 = \cdots = A_n = A$, $p_1 = \cdots = p_n = p$ and $r = \frac{p}{n}$.
\end{proof}

\begin{lem}
\label{lem:jensen}
\textsl{
Let $A$ be a matrix in $M_m(\C)$ and $\alpha \in [0, 1]$. For every unit vector $\vec{x} \in \C ^m$ and a positive integer $n$, we have $$\| |A^n| ^\alpha \vec{x} \| \le \| A^n \vec{x} \|^\alpha.$$
}
\end{lem}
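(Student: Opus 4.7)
The plan is to reduce the stated inequality to a scalar inequality via the spectral decomposition of $|A^n|$, and then to invoke the concavity of $t \mapsto t^\alpha$ for $\alpha \in [0,1]$.

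First I would square both sides and rewrite them as quadratic forms. Since $|A^n|^\alpha$ is self-adjoint,
$$\bigl\| |A^n|^\alpha \vec{x}\bigr\|^2 = \bigl\langle |A^n|^{2\alpha} \vec{x}, \vec{x}\bigr\rangle, \qquad \|A^n \vec{x}\|^2 = \bigl\langle (A^n)^* A^n \vec{x}, \vec{x}\bigr\rangle = \bigl\langle |A^n|^2 \vec{x}, \vec{x}\bigr\rangle.$$
Hence it suffices to prove
$$\bigl\langle |A^n|^{2\alpha} \vec{x}, \vec{x}\bigr\rangle \le \bigl\langle |A^n|^2 \vec{x}, \vec{x}\bigr\rangle^{\alpha}.$$

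Next, I would apply the spectral theorem to the positive-semidefinite matrix $|A^n|^2$, writing $|A^n|^2 = \sum_{j} \mu_j P_j$ with $\mu_j \ge 0$ and $\{P_j\}$ pairwise orthogonal projections summing to the identity. Setting $p_j := \langle P_j \vec{x}, \vec{x}\rangle = \|P_j \vec{x}\|^2$, the numbers $p_j$ are non-negative and satisfy $\sum_j p_j = \|\vec{x}\|^2 = 1$, so they form a probability distribution. The continuous functional calculus gives $|A^n|^{2\alpha} = \sum_j \mu_j^\alpha P_j$, and therefore the desired inequality becomes the scalar statement
$$\sum_j \mu_j^\alpha\, p_j \;\le\; \Bigl(\sum_j \mu_j\, p_j\Bigr)^{\alpha}.$$

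Finally, since $t \mapsto t^\alpha$ is concave on $[0, \infty)$ for $\alpha \in [0,1]$, Jensen's inequality applied to the probability distribution $\{p_j\}$ yields exactly the displayed inequality. Taking square roots then gives $\||A^n|^\alpha \vec{x}\| \le \|A^n \vec{x}\|^\alpha$, as required. There is no serious obstacle here; the only mild subtlety is the convention for $0^0$ in the edge case $\alpha = 0$ with $A^n \vec{x} = 0$, which is harmless since in that case $|A^n| \vec{x} = 0$ as well and the inequality reads $0 \le 0$ under the natural convention $|A^n|^0$ acting as the identity on $\ker(|A^n|)^{\perp}$ and as zero on $\ker(|A^n|)$; for the applications in the sequel only $\alpha \in (0,1)$ is needed.
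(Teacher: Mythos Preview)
Your proof is correct and follows essentially the same approach as the paper: both reduce to the scalar Jensen inequality for the concave function $t \mapsto t^\alpha$ by expressing the squared norms $\||A^n|^\alpha \vec{x}\|^2$ and $\|A^n\vec{x}\|^2$ as quadratic forms $\langle |A^n|^{2\alpha}\vec{x},\vec{x}\rangle$ and $\langle |A^n|^{2}\vec{x},\vec{x}\rangle$ and then diagonalizing (the paper via a unitary conjugation, you via the spectral projections --- the same thing). The only cosmetic difference is that the paper first records the inequality $\langle H^\alpha \vec{x},\vec{x}\rangle \le \langle H\vec{x},\vec{x}\rangle^\alpha$ for a general positive-semidefinite $H$ and then substitutes $H \leadsto H^2 \leadsto |A^n|^2$, whereas you go straight to $|A^n|^2$; your digression on the $\alpha=0$ edge case is unnecessary and slightly muddled (the paper ignores it), but as you note it is irrelevant to the applications.
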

\begin{proof}
Let $\alpha \in [0, 1]$ and $H$ be a positive-semidefinite matrix in $M_m(\C)$. Since $U^*H^{\alpha} U = (U^*HU)^{\alpha}$ for every unitary matrix $U \in M_m(\C)$, without loss of generality, we may assume that $H = \mathrm{diag}(h_1, \ldots, h_m)$ is in diagonal form. Let $\vec{x} = (x_1, \ldots, x_m) ^{\dagger}$ be a unit vector in $\C ^m$ so that $\sum_{i=1}^m |x_i|^2 = 1$. Since, for $n \in \N$, the function $x \mapsto x^\alpha$ on $[0, \infty)$ is concave, from Jensen's inequality for $h_1, h_2, \ldots, h_m$ with weights $|x_1|^2, \ldots, |x_m|^2$, we see that $$\langle H^\alpha \vec{x}, \vec{x} \rangle = \sum_{i=1}^m |x_i|^2 h_i^\alpha \le \big( \sum_{i=1}^m |x_i|^2 h_i \big)^{\alpha} = \langle H \vec{x}, \vec{x} \rangle^{\alpha}.$$

Using the above inequality for $H^2$, we get 
$$\|H^{\alpha}\vec{x}\|^2 = \langle H^{2 \alpha} \vec{x}, \vec{x} \rangle \le \langle H^2 \vec{x} , \vec{x} \rangle ^{\alpha} = \|H \vec{x} \|^{2 \alpha}$$
which implies that 
\begin{equation}
\label{ineq:jensen}
\|H^{\alpha} \vec{x} \| \le \|H \vec{x}\|^{\alpha}.
\end{equation}

For $T \in M_m(\C)$ and $\vec{x} \in \C ^m$, note that $$\|T \vec{x} \|^2 = \langle T^*T \vec{x}, \vec{x} \rangle = \langle |T|^2 \vec{x}, \vec{x} \rangle = \big\| |T| \vec{x}\big\|^2,$$
which implies that $\| T \vec{x} \| = \big\| |T| \vec{x} \big\|$. We get the desired inequality by plugging in $H = |A^n|$ in inequality (\ref{ineq:jensen}).
\end{proof}

\section{The Main Theorem}

\begin{lem}
\label{lem:diag_conv}
\textsl{
Let $A \in M_m(\C)$ with eigenvalues $\lambda_1, \ldots, \lambda_m$ (counted with multiplicity). Then there is a sequence of invertible matrices $W_n \in GL_n(\C)$ such that $W_n A W_n ^{-1} \to \mathrm{diag}(\lambda_1, \ldots, \lambda_n)$.
}
\end{lem}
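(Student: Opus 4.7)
The plan is to reduce to the Jordan canonical form and then use a standard diagonal rescaling trick on each Jordan block to shrink the superdiagonal entries. First I would invoke the Jordan canonical form theorem to produce an invertible matrix $P \in GL_m(\C)$ such that $P^{-1} A P = J$, where $J = \bigoplus_{i=1}^r J_{k_i}(\mu_i)$ is block-diagonal with $J_{k_i}(\mu_i)$ the $k_i \times k_i$ Jordan block with eigenvalue $\mu_i$. The multiset of diagonal entries of $J$ equals $\lambda(A) = \{\lambda_1, \ldots, \lambda_m\}$, and after applying a fixed permutation similarity (which can be absorbed into $P$) I may assume the diagonal of $J$ reads $\lambda_1, \ldots, \lambda_m$ in the prescribed order.

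Next I would carry out the block-level scaling. For a $k \times k$ Jordan block $J_k(\mu) = \mu I_k + N_k$ (where $N_k$ is the nilpotent matrix with $1$'s on the superdiagonal) and a parameter $\ep > 0$, take
\[
\Delta_\ep^{(k)} \;:=\; \mathrm{diag}(\ep^{k-1}, \ep^{k-2}, \ldots, \ep, 1).
\]
A direct entrywise computation shows that $\Delta_\ep^{(k)} J_k(\mu) (\Delta_\ep^{(k)})^{-1} = \mu I_k + \ep N_k$, which converges in norm to $\mu I_k$ as $\ep \to 0^+$.

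Finally I would assemble the sequence. For each $n \in \N$ set $D_n := \bigoplus_{i=1}^r \Delta_{1/n}^{(k_i)} \in GL_m(\C)$ and $W_n := D_n P^{-1}$. Performing the similarity block by block yields
\[
W_n A W_n^{-1} \;=\; D_n J D_n^{-1} \;=\; \bigoplus_{i=1}^r \Bigl( \mu_i I_{k_i} + \tfrac{1}{n} N_{k_i} \Bigr),
\]
which converges in norm to $\bigoplus_{i=1}^r \mu_i I_{k_i} = \mathrm{diag}(\lambda_1, \ldots, \lambda_m)$, as desired. There is essentially no obstacle: the entire content of the lemma is the existence of the Jordan form together with the explicit diagonal rescaling that witnesses non-diagonalisable matrices as limits (in the similarity class) of diagonal ones.
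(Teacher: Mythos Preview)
Your proof is correct and rests on the same core idea as the paper's: reduce to a triangular form, then conjugate by diagonal matrices whose entries grow at different rates so that the off-diagonal entries are damped to zero. The only difference is in the initial reduction. The paper passes to Schur upper-triangular form via a unitary and then applies the single global scaling $W_n=\mathrm{diag}(1,n,n^2,\ldots,n^{m-1})$, so that the $(i,j)$ entry gets multiplied by $n^{-(j-i)}$ and every strictly upper-triangular entry dies; you instead invoke the Jordan canonical form and rescale each block separately. Your route works, but the paper's is a bit more economical: Schur triangularization is lighter than the Jordan theorem, and one diagonal matrix handles the whole thing without block bookkeeping. (Minor remark on your permutation step: a permutation similarity that reorders the diagonal to a prescribed $\lambda_1,\ldots,\lambda_m$ need not preserve the Jordan block structure; the clean fix is to apply the permutation \emph{after} passing to the diagonal limit, or simply to note that the ordering of the $\lambda_j$ is immaterial for how the lemma is used.)
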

\begin{proof}
Without loss of generality, we may assume that $A$ is in upper triangular form (by conjugating with an appropriate unitary). For $n \in \N$, we define $W_n := \mathrm{diag}(1, n, n^2, \ldots, n^{m-1})$. A straightforward computation shows that the $(i, j)^{\mathrm{th}}$ entry of $W_n A W_n ^{-1}$ is $\frac{1}{n^{j-i}}$ times the $(i, j)^{\mathrm{th}}$ entry of $A$ so that the diagonal entries remain unchanged, the superdiagonal entries tend to $0$ as $n \to \infty$ and the subdiagonal entries remain equal to zero.
\end{proof}

\begin{prop}
\label{prop:mod_eig_p_tr}
\textsl{
Let $A \in M_m(\C)$ with eigenvalues $\lambda_1, \ldots, \lambda_m$ (counted with multiplicity). Then for $0 \le p < \infty$, we have $$\sum_{i=1}^m |\lambda_i|^p = \lim_{n \to \infty} \mathrm{tr}(|A^n|^{\frac{p}{n}}).$$
}
\end{prop}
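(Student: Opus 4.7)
The plan is to sandwich $\mathrm{tr}(|A^n|^{p/n})$ between a lower and an upper bound that both tend to $\sum_{i=1}^m |\lambda_i|^p$. The case $p=0$ is trivial under the standard conventions, so assume $p>0$. For the lower bound, since the eigenvalues of $A^n$ (with multiplicity) are $\lambda_1^n, \ldots, \lambda_m^n$, applying Proposition \ref{prop:dom} to the matrix $A^n$ with exponent $p/n$ gives
$$\sum_{i=1}^m |\lambda_i|^p \;=\; \sum_{i=1}^m |\lambda_i^n|^{p/n} \;\le\; \mathrm{tr}(|A^n|^{p/n})$$
for every $n \in \N$, so $\liminf_n \mathrm{tr}(|A^n|^{p/n}) \ge \sum_{i=1}^m |\lambda_i|^p$ immediately.

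For the matching upper bound, the key is to combine Corollary \ref{cor:holder} with the approximate diagonalization in Lemma \ref{lem:diag_conv}. Let $\{V_\ell\} \subset GL_m(\C)$ be the sequence produced by that lemma, so that $B_\ell := V_\ell A V_\ell^{-1}$ converges in norm to $\tilde D := \mathrm{diag}(\lambda_1, \ldots, \lambda_m)$. Since $A^n = V_\ell^{-1} B_\ell^n V_\ell$, Proposition \ref{prop:holder}(ii) followed by Corollary \ref{cor:holder} applied to $B_\ell$ yields
$$\mathrm{tr}(|A^n|^{p/n}) \;\le\; \|V_\ell\|^{p/n}\, \|V_\ell^{-1}\|^{p/n}\, \mathrm{tr}(|B_\ell^n|^{p/n}) \;\le\; \|V_\ell\|^{p/n}\, \|V_\ell^{-1}\|^{p/n}\, \mathrm{tr}(|B_\ell|^p).$$
Keeping $\ell$ fixed and letting $n \to \infty$ sends the scalar prefactor to $1$, giving $\limsup_n \mathrm{tr}(|A^n|^{p/n}) \le \mathrm{tr}(|B_\ell|^p)$ for every $\ell$.

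It remains to show that $\mathrm{tr}(|B_\ell|^p) \to \sum_i |\lambda_i|^p$ as $\ell \to \infty$. From $B_\ell \to \tilde D$ in norm, continuity of the adjoint and of matrix multiplication gives $B_\ell^* B_\ell \to \tilde D^* \tilde D$ in norm, and Lemma \ref{lem:sing_conv} applied to these Hermitian matrices forces $s_j(B_\ell)^2 \to |\lambda_{\sigma(j)}|^2$, where $\sigma$ is the permutation that orders $|\lambda_i|$ decreasingly. Continuity of $t \mapsto t^{p/2}$ on $[0,\infty)$ then produces $\mathrm{tr}(|B_\ell|^p) = \sum_j s_j(B_\ell)^p \to \sum_j |\lambda_{\sigma(j)}|^p = \sum_i |\lambda_i|^p$, and together with the previous paragraph this yields $\limsup_n \mathrm{tr}(|A^n|^{p/n}) \le \sum_i |\lambda_i|^p$, matching the lower bound.

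No deep obstacle is anticipated. The only conceptual subtlety is that the uniform bound $\mathrm{tr}(|A^n|^{p/n}) \le \mathrm{tr}(|A|^p)$ supplied by Corollary \ref{cor:holder} is generally strictly larger than the target $\sum_i |\lambda_i|^p$ (indeed, Proposition \ref{prop:dom} is usually strict), and the slack is absorbed precisely by the freedom in Lemma \ref{lem:diag_conv} to replace $A$ by a conjugate arbitrarily close to the diagonal matrix $\tilde D$, for which the bound becomes sharp.
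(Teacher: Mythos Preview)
Your proposal is correct and follows essentially the same approach as the paper: the same lower bound via Proposition~\ref{prop:dom}, and the same upper bound obtained by conjugating $A$ to a near-diagonal matrix via Lemma~\ref{lem:diag_conv} and combining Proposition~\ref{prop:holder}(ii) with Corollary~\ref{cor:holder}. The only cosmetic difference is that the paper phrases the final step as an $\varepsilon$-argument (choose $k$ with $\mathrm{tr}(|\Lambda_k|^p)\le\sum_i|\lambda_i|^p+\varepsilon$), whereas you first fix $\ell$, take $\limsup_n$, and then let $\ell\to\infty$; the content is identical.
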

\begin{proof}
From Proposition \ref{prop:dom} and the spectral mapping theorem, for all $n \in \N$ we have $$\sum_{i=1}^m |\lambda_i|^p \le \mathrm{tr}(|A^n|^{\frac{p}{n}}).
$$
Thus
\begin{equation}
\label{eqn:liminf}
\sum_{i=1}^m |\lambda_i|^p \le \liminf_{n \to \infty} \mathrm{tr}(|A^n|^{\frac{p}{n}}).
\end{equation}

Let $W_n$ be as defined in the proof of Lemma \ref{lem:diag_conv}.. For $n \in \N$, define $\Lambda_n := W_n AW_n ^{-1}$. From Proposition \ref{prop:dom}, we have $$\mathrm{tr}(|\Lambda _n|^p) \ge \sum_{i=1}^m |\lambda_i|^p,\;\;\; \forall n \in \N.$$ Since $|\Lambda_n|^p \to \mathrm{diag}(|\lambda_1|^p, \ldots, |\lambda _m|^p)$ as $n \to \infty$, we observe that $$\lim_{n \to \infty} \mathrm{tr}(|\Lambda _n|^p) = \sum_{i=1}^m |\lambda_i|^p.$$

Let $\varepsilon > 0$. Then there exist $k \in \N$ such that $\mathrm{tr}(|\Lambda _k|^p) \le \sum_{i=1}^m |\lambda_i|^p + \varepsilon$. From Corollary \ref{cor:holder}, we see that 
$$\mathrm{tr}(|\Lambda _k ^n|^{\frac{p}{n}}) \le \mathrm{tr}(|\Lambda _k|^p) \le \sum_{i=1}^m |\lambda_i|^p + \varepsilon.$$

Since $A^n = W_k ^{-1} \Lambda_k ^n W_k$, from Proposition \ref{prop:holder}-(ii), it follows that 
\begin{align*}
\mathrm{tr}(|A ^n|^{\frac{p}{n}}) = \mathrm{tr}(|W_k ^{-1} \Lambda_k ^n W_k|^{\frac{p}{n}}) &\le \|W_k ^{-1}\|^{\frac{p}{n}} \|W_k \|^{\frac{p}{n}}\; \mathrm{tr}(|\Lambda _k ^n|^{\frac{p}{n}}) \\
&\le (\|W_k ^{-1}\| \|W_k \|)^{\frac{p}{n}}) \Big( \sum_{i=1}^m |\lambda_i|^p + \varepsilon \Big).
\end{align*}

Thus for all  $\varepsilon >0$, we have 
$$\limsup_{n \to \infty} \mathrm{tr}(|A^n|^{\frac{p}{n}}) \le \sum_{i=1}^m |\lambda_i|^p + \varepsilon,$$
which implies that 
\begin{equation}
\label{eqn:limsup}
\limsup_{n \to \infty} \mathrm{tr}(|A^n|^{\frac{p}{n}}) \le \sum_{i=1}^m |\lambda_i|^p.
\end{equation}
Combining the inequalities (\ref{eqn:liminf}) and (\ref{eqn:limsup}), we get the desired result.
\end{proof}

\begin{prop}
\label{prop:lim_point}
\textsl{
Let $A$ be a matrix in $M_m(\C)$.
\begin{itemize}
\item[(i)] The set of limit points of the sequence $\{ |A^n|^{\frac{1}{n}} \}_{n \in \N}$ is non-empty and consists of positive-semidefinite matrices.
\end{itemize}
Let $H$ be a limit point of the sequence $\{ |A^n|^{\frac{1}{n}} \}_{n \in \N}$.
\begin{itemize}
\item[(ii)] For every $p \in (0, \infty)$, we have $\mathrm{tr}(H^p) = \sum_{i=1}^n |\lambda_i|^p$.
\item[(iii)] $|\lambda|(H) = |\lambda|(A)$.
\end{itemize}
}
\end{prop}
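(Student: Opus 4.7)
The overall plan is to handle (i) by a direct compactness argument, (ii) by continuity of the $p$-th power on the positive-semidefinite cone combined with Proposition \ref{prop:mod_eig_p_tr}, and (iii) by deducing that the multiset of eigenvalues of a positive-semidefinite matrix is determined by its power-sums.

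For (i), I would first verify that $\{|A^n|^{1/n}\}_{n \in \N}$ is norm-bounded. Since $\||A^n|\| = \|A^n\|$, the spectral-radius formula gives $\| |A^n|^{1/n} \| = \|A^n\|^{1/n} \to \rho(A)$, so the sequence sits inside a bounded subset of the finite-dimensional space $M_m(\C)$. The Bolzano--Weierstrass theorem then supplies convergent subsequences, showing the limit-point set is non-empty. Since each $|A^n|^{1/n}$ lies in the (closed) cone of positive-semidefinite matrices, every limit point is positive-semidefinite.

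For (ii), I would fix a limit point $H$ realized by $|A^{n_k}|^{1/n_k} \to H$ along some subsequence. The map $X \mapsto X^p$ is continuous on positive-semidefinite matrices (via the continuous functional calculus applied to $t \mapsto t^p$ on $[0,\infty)$), so
$$|A^{n_k}|^{p/n_k} \;=\; \bigl(|A^{n_k}|^{1/n_k}\bigr)^p \;\longrightarrow\; H^p,$$
and hence $\mathrm{tr}\bigl(|A^{n_k}|^{p/n_k}\bigr) \to \mathrm{tr}(H^p)$ by continuity of the trace. Proposition \ref{prop:mod_eig_p_tr} identifies this limit as $\sum_{i=1}^m |\lambda_i|^p$, giving the asserted identity.

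For (iii), let $h_1 \ge h_2 \ge \cdots \ge h_m \ge 0$ be the eigenvalues of the positive-semidefinite matrix $H$ (which coincide with the entries of $|\lambda|(H)$), and compare them to the sorted list $|\lambda_1|(A) \ge \cdots \ge |\lambda_m|(A)$. Part (ii) yields $\sum_i h_i^p = \sum_i |\lambda_i|(A)^p$ for every $p \in (0,\infty)$. To upgrade these scalar identities to multiset equality, I would set $c := \max\bigl(h_1, |\lambda_1|(A)\bigr)$, divide the identity by $c^p$, and let $p \to \infty$: only those summands on each side that equal $c$ survive in the limit, forcing the largest value and its multiplicity to agree on both sides. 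Peeling off these common terms and iterating on the next distinct positive values shows that the positive parts of the two multisets coincide, and then equality of the multiplicities of $0$ is automatic since both multisets have cardinality $m$. The main obstacle I expect is precisely this last step --- converting the one-parameter family of power-sum identities into equality of multisets --- but the limiting argument above (in essence, the fact that a finitely supported measure on $(0,\infty)$ is determined by its moments) handles it cleanly.
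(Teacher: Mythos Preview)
Your proposal is correct and follows essentially the same approach as the paper: compactness for (i), continuity of $X \mapsto X^p$ and the trace together with Proposition \ref{prop:mod_eig_p_tr} for (ii), and the power-sum identity for (iii). The only minor differences are that the paper bounds $\|A^n\|^{1/n}$ directly by $\|A\|$ via submultiplicativity rather than invoking the spectral-radius formula, and the paper simply asserts in (iii) that equality of all $p$-th power sums forces equality of the multisets, whereas you supply the (correct) peeling argument.
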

\begin{proof}
\noindent (i) Let $A \in M_m(\C)$. Note that $\||A^n|^{\frac{1}{n}}\| = \||A^n|\|^{\frac{1}{n}} = \|A^n\|^{\frac{1}{n}} \le \|A\|$ for all $n \in \N$. Since the ball of radius $\|A\|$ in $M_m(\C)$ is compact, the set of limit-points of the sequence $\{ |A^n|^{\frac{1}{n}} \}_{n \in \N}$ is non-empty. Positivity of the limit-points follows from the fact that the cone of positive-semidefinite matrices is norm-closed.
\vskip 0.1in

\noindent (ii) Let $\{ |A^{n_k}|^{\frac{1}{n_k}} \}$ be a subsequence of $\{ |A^{n}|^{\frac{1}{n}} \}$ converging to $H$. Then for all $p \in (0, \infty)$, we have $|A^{n_k}|^{\frac{p}{n_k}} \to H^p$ so that $\mathrm{tr}(|A^{n_k}|^{\frac{p}{n_k}} ) \to \mathrm{tr}(H^p)$. Let $\lambda_1, \ldots, \lambda_m$ be the eigenvalues of $A$ (counted with multiplicity). By Proposition \ref{prop:mod_eig_p_tr},  $\mathrm{tr}(H^p) = \sum_{i=1}^m |\lambda _i|^p$ for all $p \in (0, \infty)$.
\vskip 0.1in

\noindent (iii) Let $\mu_1, \ldots, \mu_m$ be the eigenvalues of $H$ (counted with multiplicity). Since $H$ is positive-semidefinite, we have $$\sum_{i=1}^m \mu_i^p = \mathrm{tr}(H^p) = \sum_{i=1}^m |\lambda _i | ^p$$ for all $p \in (0, \infty).$ Thus the multisets $\{ \mu _1, \mu_2,  \ldots, \mu _m \}$ and $\{ |\lambda_1| , |\lambda _2|,  \ldots, |\lambda _m| \}$ are identical.  

\end{proof}

\begin{definition}
Let $A$ be a matrix in $M_m(\C)$. For $r \ge 0$, we define $$V(A, r) := \{ \vec{x} \in \C^m : \limsup_n \|A^n \vec{x}\|^{\frac{1}{n}} \le r\}.$$
\end{definition}

\begin{lem}
\label{lem:D_eig}
\textsl{
Let $A$ be a matrix in $M_m(\C)$. Let $A = D + N$ be the Jordan-Chevalley decomposition of $A$ into its (commuting) diagonalizable and nilpotent parts ($D, N$, respectively). For every $r \ge 0$, the set $V(A, r)$ is a linear subspace of $\C ^m$ and contains the eigenvectors of $D$ corresponding to eigenvalues $\lambda$ with $|\lambda| \le r$.
}
\end{lem}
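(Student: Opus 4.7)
The plan is to verify the two assertions separately: that $V(A,r)$ is a linear subspace, and that it contains the specified eigenvectors of $D$.

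First I would show linearity. Closure under scalar multiplication is immediate: for $\alpha \in \C$ with $\alpha \ne 0$, $\|A^n(\alpha \vec{x})\|^{1/n} = |\alpha|^{1/n} \|A^n \vec{x}\|^{1/n}$, and $|\alpha|^{1/n} \to 1$, so $\limsup_n \|A^n(\alpha \vec{x})\|^{1/n} = \limsup_n \|A^n \vec{x}\|^{1/n}$; the case $\alpha = 0$ is trivial since $\vec{0} \in V(A,r)$. For additivity, given $\vec{x}, \vec{y} \in V(A,r)$, the triangle inequality gives $\|A^n(\vec{x}+\vec{y})\| \le \|A^n \vec{x}\| + \|A^n \vec{y}\|$, and applying Lemma \ref{lem:limsup_rad} with two sequences yields
\[
\limsup_n \|A^n(\vec{x}+\vec{y})\|^{1/n} \le \max\{\limsup_n \|A^n \vec{x}\|^{1/n}, \limsup_n \|A^n \vec{y}\|^{1/n}\} \le r.
\]

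Next I would handle the eigenvector statement. The key observation is that since $D$ and $N$ commute, the binomial theorem applies:
\[
A^n = (D+N)^n = \sum_{k=0}^n \binom{n}{k} D^{n-k} N^k.
\]
Let $\ell$ be the nilpotency index of $N$, so $N^k = 0$ for $k \ge \ell$. For any eigenvector $\vec{x}$ of $D$ with $D\vec{x} = \lambda \vec{x}$ and $|\lambda| \le r$, and for $n \ge \ell$, the sum truncates to
\[
A^n \vec{x} = \sum_{k=0}^{\ell-1} \binom{n}{k} \lambda^{n-k} N^k \vec{x}.
\]
Taking norms and applying the triangle inequality, I get $\|A^n \vec{x}\| \le \sum_{k=0}^{\ell-1} \binom{n}{k} |\lambda|^{n-k} \|N^k \vec{x}\|$.

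Finally I would extract the $n$-th root asymptotics. If $|\lambda| > 0$, then for each fixed $k$, Lemma \ref{lem:seq1} gives $\binom{n}{k}^{1/n} \to 1$ and $|\lambda|^{(n-k)/n} \to |\lambda|$, while $\|N^k \vec{x}\|^{1/n} \to 1$ (or the term vanishes if $N^k \vec{x} = 0$). Applying Lemma \ref{lem:limsup_rad} to the at-most-$\ell$ summands,
\[
\limsup_n \|A^n \vec{x}\|^{1/n} \le \max_{0 \le k \le \ell-1} |\lambda| = |\lambda| \le r.
\]
If $\lambda = 0$, then $D^{n-k}\vec{x} = 0$ for $n > k$, so $A^n \vec{x} = N^n \vec{x} = 0$ for $n \ge \ell$, giving $\limsup_n \|A^n \vec{x}\|^{1/n} = 0 \le r$. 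This is routine; the only subtle point is invoking commutativity of $D, N$ to justify the binomial expansion, and there is no real obstacle beyond bookkeeping.
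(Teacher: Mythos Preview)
Your proof is correct and follows essentially the same approach as the paper: both use Lemma~\ref{lem:limsup_rad} for the subspace property and the commuting binomial expansion $(D+N)^n$ truncated by nilpotency, followed by Lemmas~\ref{lem:seq1} and~\ref{lem:limsup_rad} to extract the $n$-th root asymptotics. The only differences are cosmetic (you separate scalar multiplication from addition and treat the case $\lambda=0$ explicitly, while the paper bounds the nilpotency index crudely by $m$), and none of them affects the argument.
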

\begin{proof}
Let $\vec{x}, \vec{y} \in \C ^m$ and $\mu \in \C$. By Lemma \ref{lem:limsup_rad}, we have 
\begin{align*}
\limsup_n \|A^n(\mu \vec{x} + \vec{y})\|^{\frac{1}{n}} &\le \limsup_n \big(|\mu| \|A^n \vec{x}\| + \|A^n\vec{y}\| \big) ^{\frac{1}{n}} \\
&\le \max \big\{ \limsup_n |\mu|^{\frac{1}{n}} \|A^n \vec{x}\|^{\frac{1}{n}}, \limsup_n \|A^n\vec{y}\|^{\frac{1}{n}} \big\} \\
&\le r.
\end{align*}
Thus $\mu \vec{x} + \vec{y} \in V(A, r)$. This shows that $V(A, r)$ is a linear subspace of $\C ^m$.

Let $\vec{x}$ be an eigenvector of $D$ with eigenvalue $\lambda$. Below we show that $\vec{x} \in V(A, |\lambda|)$ (which completes the proof of the lemma). Since $N$ is an $m \times m$ nilpotent matrix, we have $N^m = 0$. Since $D$ and $N$ commute,  for $n  \ge m$ we have $$A^n = (D+N)^n = \sum_{j=0}^{m-1} \binom{n}{j} N^j D^{n-j}.$$ 

Thus $A^n \vec{x} = \sum_{j=0}^{m-1} \binom{n}{j} \lambda^{n-j} N^j \vec{x}.$ From Lemma \ref{lem:seq1} we have $$\lim_n |\lambda|^{1 - \frac{j}{n}} \binom{n}{j}^{\frac{1}{n}} \|N^j \vec{x}\|^{\frac{1}{n}} \le |\lambda|, \textrm{ for } 0 \le j \le m-1.$$

Using Lemma \ref{lem:limsup_rad}, we conclude that
\begin{align*}
\limsup_n \|A^n \vec{x}\|^{\frac{1}{n}} &\le \limsup_n \Big( \sum_{j=0}^{m-1} |\lambda|^{1 - \frac{j}{n}} \binom{n}{j}^{\frac{1}{n}} \|N^j \vec{x}\|^{\frac{1}{n}} \Big)\\
&\le \max_{0 \le j \le m-1} \big\{ \lim_n |\lambda|^{1 - \frac{j}{n}} \binom{n}{j}^{\frac{1}{n}} \|N^j \vec{x}\|^{\frac{1}{n}} \big\} \\
& \le |\lambda|.
\end{align*}
Thus $\vec{x} \in V(A, |\lambda|)$.
\end{proof}

\begin{lem}
\label{lem:pos_inv}
\textsl{
Let $H$ be a positive-semidefinite matrix with spectral decomposition $\sum_{i=1}^k a_i F_i$. Set $a_{k+1} := \infty$. 
\begin{itemize}
\item[(i)] For $1 \le j \le k$ and $r \in [a_j, a_{j+1})$, we have $V(H, r) = V(H, a_j) = \mathrm{ran} \big( \sum_{i=1}^j F_i \big)$;
\item[(ii)] If $H_1, H_2$ are positive-semidefinite matrices in $M_m(\C)$ such that $V(H_1, r) = V(H_2, r)$ for all $r \ge 0$, then $H_1 = H_2$.
\end{itemize}
}
\end{lem}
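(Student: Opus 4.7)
For part (i), my plan is to exploit the spectral decomposition $H = \sum_{i=1}^k a_i F_i$ to compute $\limsup_n \|H^n \vec{x}\|^{1/n}$ exactly. Since the $F_i$ are mutually orthogonal spectral projections summing to the identity, the Pythagorean identity gives
\begin{equation*}
\|H^n \vec{x}\|^2 \;=\; \Big\| \sum_{i=1}^k a_i^n F_i \vec{x} \Big\|^2 \;=\; \sum_{i=1}^k (a_i^2)^n \|F_i \vec{x}\|^2.
\end{equation*}
For $\vec{x} \ne 0$, let $i_0$ denote the largest index for which $F_{i_0} \vec{x} \ne 0$. Applying Lemma \ref{lem:max_comb} to the truncated sum (with $(a_i^2)_{1 \le i \le i_0}$ in the role of the $a_i$'s and weights $t_i := \|F_i \vec{x}\|^2$, so that the nondegeneracy hypothesis $t_{i_0} \ne 0$ is met) yields $\lim_n \|H^n \vec{x}\|^{1/n} = a_{i_0}$. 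Hence $\vec{x} \in V(H, r)$ if and only if $a_{i_0} \le r$, equivalently $F_i \vec{x} = 0$ for every $i$ with $a_i > r$; and for $r \in [a_j, a_{j+1})$ this is precisely the condition $\vec{x} \in \mathrm{ran}\bigl( \sum_{i=1}^j F_i \bigr)$. The equality $V(H, r) = V(H, a_j)$ on this interval follows immediately.

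For part (ii), my strategy is to observe that the family $\{ V(H, r) \}_{r \ge 0}$ uniquely recovers the spectral decomposition of a positive-semidefinite matrix. By part (i), $r \mapsto V(H, r)$ is a non-decreasing, piecewise-constant, subspace-valued step function whose jumps (in $r$) occur precisely at the distinct eigenvalues $a_1 < \cdots < a_k$ of $H$. The hypothesis $V(H_1, r) = V(H_2, r)$ for every $r \ge 0$ therefore forces $H_1$ and $H_2$ to share the same list of distinct eigenvalues and, at each such eigenvalue $a_j$, to share the same subspace $\mathrm{ran}\bigl( \sum_{i=1}^j F_i^{(1)} \bigr) = \mathrm{ran}\bigl( \sum_{i=1}^j F_i^{(2)} \bigr)$, where $F_i^{(\ell)}$ denotes the $i$-th spectral projection of $H_\ell$. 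Since these cumulative sums are orthogonal projections, taking consecutive differences gives $F_j^{(1)} = F_j^{(2)}$ for each $j$, whence $H_1 = H_2$.

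I expect no serious obstacle, and anticipate a short argument. The only mild subtlety lies in part (i), namely ensuring the hypothesis $t_m \ne 0$ of Lemma \ref{lem:max_comb} holds; this is handled by truncating the sum at the index $i_0$. The degenerate case $\vec{x} = 0$ is trivial since $0$ belongs to every $\mathrm{ran}\bigl( \sum_{i=1}^j F_i \bigr)$, and the case $a_1 = 0$ (with $i_0 = 1$) is consistent with the convention $\lim_n 0^{1/n} = 0$. Once these bookkeeping points are in order, the result is a direct consequence of Lemma \ref{lem:max_comb} and the uniqueness of the spectral decomposition of a positive-semidefinite matrix.
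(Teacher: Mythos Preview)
Your proof is correct and follows essentially the same route as the paper. For part (i) both arguments compute $\|H^n\vec{x}\|^2=\sum_i a_i^{2n}\,\|F_i\vec{x}\|^2$ (the paper writes this as $\langle H^{2n}\vec{x},\vec{x}\rangle$ with weights $t_i=\langle F_i\vec{x},\vec{x}\rangle$, which is the same thing for orthogonal projections), pick out the largest index with non-zero component, and invoke Lemma~\ref{lem:max_comb}. For part (ii) the paper gives no explicit argument at all, so your step-function reconstruction of the spectral data from the family $\{V(H,r)\}_{r\ge 0}$ is a welcome spelling-out of what the paper leaves to the reader.
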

\begin{proof}
Let $\vec{x}$ be a unit vector in $\C ^m$, and define $t_j := \langle F_j \vec{x} , \vec{x} \rangle \ge 0$. Since $\sum_{i=1}^n t_i = 1$, clearly not all of the $t_i$'s are zero. Let $1 \le \ell \le k$ be the largest integer such that $t_{\ell} \ne 0$. Note that $$\langle H^{2n} \vec{x} , \vec{x} \rangle = \big\langle (\sum_{i=1}^k a_i ^{2n} F_i) \vec{x} , \vec{x} \big\rangle = \sum_{i=1}^m a_i ^{2n} t_i.$$

Using Lemma \ref{lem:max_comb}, we have $$\lim_{n \to \infty} \|H^n \vec{x}\|^{\frac{1}{n}} = \lim_{n \to \infty} \langle H^{2n} \vec{x} , \vec{x} \rangle ^{\frac{1}{2n}} =  a_{\ell}.$$

Thus $\vec{x} \in V(H, a_j)$ if and only if $\sum_{i=j+1}^k \langle F_i \vec{x}, \vec{x} \rangle = 0$, which holds if and only if $\vec{x}$ is in the range of $\sum_{i=1}^j F_i$.
\end{proof}

\begin{prop}
\label{prop:main}
\textsl{
Let A be a matrix in $M_m(C)$. Let $A = D + N$ be the Jordan-Chevalley
decomposition of $A$ into its commuting diagonalizable and nilpotent parts ($D$, $N$, respectively).
Let $H$ be a limit point of the sequence $\{ |A^n|^{\frac{1}{n}} \}_{n \in \N}$. Then for every $r \ge 0$, we have $V(A, r) = V(H,r)$, both of which are equal to the span of the set of eigenvectors of $D$ with eigenvalue-modulus less than or equal to $r$.
}
\end{prop}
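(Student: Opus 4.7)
The plan is to establish the three-way equality $V(A, r) = V(H, r) = W_r$, where $W_r$ denotes the span of those eigenvectors of $D$ whose eigenvalues have modulus at most $r$. The strategy is two inclusions plus a dimension match. Lemma \ref{lem:D_eig} already supplies $W_r \subseteq V(A, r)$ together with the fact that $V(A, r)$ is a linear subspace.

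For the reverse inclusion $V(A, r) \subseteq W_r$, observe that since $N$ commutes with $D$, every eigenspace of $D$ is $A$-invariant, so the corresponding (not-necessarily-orthogonal) spectral projections $P_\lambda$ of $D$ commute with $A$. Given $\vec{x} \in V(A, r)$, the bound $\|A^n P_\lambda \vec{x}\| \le \|P_\lambda\|\,\|A^n \vec{x}\|$ shows that each component $\vec{x}_\lambda := P_\lambda \vec{x}$ lies in $V(A, r)$. Suppose $\vec{x}_\lambda \ne 0$ for some $\lambda$ with $|\lambda| > r$ and let $j_0$ be the largest integer with $N^{j_0}\vec{x}_\lambda \ne 0$. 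The Jordan chain $\{N^j \vec{x}_\lambda\}_{j=0}^{j_0}$ is linearly independent, so applying norm-equivalence on its finite-dimensional span to the expansion $A^n \vec{x}_\lambda = \sum_{j=0}^{j_0} \binom{n}{j} \lambda^{n-j} N^j \vec{x}_\lambda$ yields $\|A^n \vec{x}_\lambda\| \ge c |\lambda|^n$ for some $c > 0$ (the $j=0$ coefficient already has size $|\lambda|^n$). Hence $\limsup_n \|A^n \vec{x}_\lambda\|^{1/n} \ge |\lambda| > r$, contradicting $\vec{x}_\lambda \in V(A, r)$. Thus $P_\lambda \vec{x} = 0$ whenever $|\lambda| > r$, and $\vec{x} \in W_r$.

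The inclusion $V(A, r) \subseteq V(H, r)$ is then obtained via a power trick anchored on Lemma \ref{lem:jensen}. For $\vec{x} \in V(A, r)$ (which we may take to be a unit vector) and a fixed positive integer $k$, the lemma with $\alpha = k/n$ (for $n \ge k$) gives $\| |A^n|^{k/n} \vec{x} \| \le \|A^n \vec{x}\|^{k/n}$; since $|A^n|^{k/n} = (|A^n|^{1/n})^k$, passing to the limit along the subsequence $\{n_j\}$ for which $|A^{n_j}|^{1/n_j} \to H$ yields $\|H^k \vec{x}\| \le r^k$, so $\|H^k \vec{x}\|^{1/k} \le r$ for every $k \in \N$. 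A direct spectral computation for the positive-semidefinite matrix $H = \sum_i a_i F_i$ shows $\lim_k \|H^k \vec{x}\|^{1/k} = \max\{a_i : F_i \vec{x} \ne 0\}$, so this maximum is $\le r$, which by Lemma \ref{lem:pos_inv}(i) is exactly the assertion $\vec{x} \in V(H, r)$.

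A dimension count closes the loop. By Lemma \ref{lem:pos_inv}(i), $\dim V(H, r)$ equals the number of eigenvalues of $H$ (with multiplicity) not exceeding $r$, which by Proposition \ref{prop:lim_point}(iii) coincides with the number of eigenvalues of $A$ of modulus at most $r$; since $A$ and $D$ share the same spectrum with multiplicity, this equals $\dim W_r$. Combined with $W_r = V(A, r) \subseteq V(H, r)$, equal dimensions force $V(A, r) = V(H, r) = W_r$. The main obstacle I anticipate is the lower bound $\|A^n \vec{x}_\lambda\| \ge c |\lambda|^n$ in step two, which requires carefully isolating the Jordan-chain structure and invoking finite-dimensional norm-equivalence; the power trick in step three is a more mechanical application of Lemma \ref{lem:jensen} once the exponent $\alpha = k/n$ is selected.
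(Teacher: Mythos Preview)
Your argument is correct and follows the same essential strategy as the paper: the chain $W_r \subseteq V(A,r) \subseteq V(H,r)$ (from Lemma~\ref{lem:D_eig} and the Jensen-type estimate of Lemma~\ref{lem:jensen}) together with the dimension match $\dim V(H,r) = \dim W_r$ (from Proposition~\ref{prop:lim_point}(iii) and Lemma~\ref{lem:pos_inv}(i)) already forces all three spaces to coincide.

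The one substantive difference is your second paragraph, where you give a direct proof that $V(A,r) \subseteq W_r$ using the oblique spectral projections $P_\lambda$ of $D$ and a norm-equivalence lower bound along the Jordan chain.  This step is sound, but it is \emph{redundant}: once you have $W_r \subseteq V(A,r) \subseteq V(H,r)$ and $\dim W_r = \dim V(H,r)$, the equality $V(A,r) = W_r$ falls out for free, exactly as the paper does it.  In particular, the ``main obstacle'' you anticipate --- isolating the $|\lambda|^n$ lower bound via finite-dimensional norm equivalence --- never needs to be confronted.  What your extra argument buys is an independent, purely algebraic identification of $V(A,r)$ with $W_r$ that makes no reference to any limit point $H$; this is conceptually pleasant but not required for the proposition.
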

\begin{proof}
Let $\{ a_1, a_2, \ldots, a_k \}$ be the support of the multiset of modulus of eigenvalues of $A$, with $0 \le a_1 < a_2 < \cdots < a_k$, and $a_j$ occurring with multiplicity $m_j$. Note that $D$ has the same multiset of eigenvalues as $A$. Since $D$ (being diagonalizable) has a complete set of eigenvectors spanning $\C ^m$, there are $\sum_{i=1}^j m_j$ linearly-independent eigenvectors of $D$ with eigenvalue-modulus less than or equal to $a_j$. Let $r \in [a_j, a_{j+1})$, with the convention $a_{k+1} := \infty$. We observe that 
$$\dim V(H, r) = \dim V(H, a_j)  = \sum_{i=1}^j m_j \le \dim V(A, a_j)$$
where the first equality follows from Lemma \ref{lem:pos_inv}-(i), the second equality follows from Proposition \ref{prop:lim_point}-(iii), and the inequality follows from Lemma \ref{lem:D_eig}. Since $V(A, a_j) \subseteq V(A, r)$, we have \begin{equation}
\label{eq:dim_comp}
 \dim V(H, r) = \sum_{i=1}^j m_j \le \dim V(A, r).
\end{equation}

Let the subsequence $\big\{ |A^{n_k}|^{\frac{1}{n_k}} \big\}$ converge to $H$. Then $\big\{ |A^{n_k}|^\frac{p}{n_k} \big\}$ converges to $H^p$ for every $p \in (0, \infty)$. Using Lemma \ref{lem:jensen}, for every $p \in (0, \infty)$, we have $$\|H^p \vec{x}\| = \lim_{n_k} \big\| |A^{n_k}|^{\frac{p}{n_k}} \vec{x} \big\| \le \limsup_{n_k} \| A^{n_k} \vec{x} \|^{\frac{p}{n_k}} \le \limsup_{n} \|A^n \vec{x}\|^{\frac{p}{n}} \le r^p.$$
Thus $\|H^p \vec{x}\|^{\frac{1}{p}} \le r$ for every $p \in (0, \infty)$ which implies that $\vec{x} \in V(H, r)$. We conclude that 
\begin{equation}
\label{eq:contain}
V(A, r) \subseteq V(H,r), \textrm{ for } 1 \le j \le k.
\end{equation}

Combining (\ref{eq:dim_comp}) and (\ref{eq:contain}), we have $V(A, r) = V(H, r)$ and $\dim V(A, r) = \dim V(H, r) = \sum_{i=1}^{j} m_j$ for $r \in [a_j, a_{j+1})$. In particular, $V(A, r) = V(A, a_j)$ for $r \in [a_j, a_{j+1})$. Thus using Lemma \ref{lem:D_eig}, we conclude that $V(A, r)$ is spanned by the set of eigenvectors of $D$ with eigenvalue-modulus less than or equal to $a_j$.
\end{proof}

\begin{thm}
\label{thm:main}
\textsl{
Let $A \in M_m(\C)$ and $\{ a_1, \ldots, a_k \}$ be the set of modulus of eigenvalues of $A$ such that $0 \le a_1 < a_2 < \cdots < a_k$. Let $A = D + N$ be the Jordan-Chevalley decomposition of $A$ into its commuting diagonalizable and nilpotent parts ($D, N$, respectively). For $1 \le j \le k$, let $E_j$ be the orthogonal projection onto the subspace of $\C ^m$ spanned by the eigenvectors of $D$ corresponding to eigenvalues with modulus less than or equal to $a_j$, and set $E_0 := 0$. Then the following assertions hold:
\begin{itemize}
\item[(i)] The sequence $\{ |A^n|^{\frac{1}{n}} \}_{n \in \N}$ converges to the positive-semidefinite matrix $\sum_{i=1}^k a_j (E_j - E_{j-1}).$ 
\item[(ii)] A non-zero vector $\vec{x} \in \C ^m$ is in $\mathrm{ran}(E_j) \backslash \mathrm{ran}(E_{j-1})$ if and only if $\lim_{n \to \infty} \|A^n \vec{x}\|^{\frac{1}{n}} = a_j.$ 
\item[(iii)] The set $\mathrm{ran}(E_j) \backslash \mathrm{ran}(E_{j-1})$ is invariant under the action of $A^k$ for every $k \in \N$.
\end{itemize}
}
\end{thm}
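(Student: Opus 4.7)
The plan is to assemble the groundwork of Section~2 — principally Proposition \ref{prop:main} (which equates $V(A,r)$ with $V(H,r)$ for every limit point $H$ of $\{|A^n|^{1/n}\}_{n\in\N}$ and identifies this set with the span of the eigenvectors of $D$ of modulus at most $r$) and Lemma \ref{lem:pos_inv} (which recovers a positive-semidefinite matrix from its family $\{V(H,r)\}_{r\ge 0}$) — into a full description of $H$, and then to read off (ii) and (iii) from that description.

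For part (i), the sequence $\{|A^n|^{1/n}\}_{n\in\N}$ is norm-bounded by $\|A\|$ and hence precompact in $M_m(\C)$, while Proposition \ref{prop:lim_point}(i) guarantees at least one limit point. Given any two limit points $H_1,H_2$, Proposition \ref{prop:main} yields $V(H_1,r)=V(H_2,r)$ for every $r\ge 0$, so Lemma \ref{lem:pos_inv}(ii) forces $H_1=H_2$; uniqueness together with precompactness gives convergence. To identify the common limit $H$, I would write $H=\sum_\ell b_\ell F_\ell$ in spectral form with $0\le b_1<b_2<\cdots$, compare the description $V(H,r)=\mathrm{ran}\bigl(\sum_{b_\ell\le r}F_\ell\bigr)$ from Lemma \ref{lem:pos_inv}(i) against $V(H,r)=\mathrm{ran}(E_j)$ for $r\in[a_j,a_{j+1})$ from Proposition \ref{prop:main}, and match the jumps to conclude $\{b_\ell\}=\{a_j\}$ and $F_j=E_j-E_{j-1}$, giving $H=\sum_{j=1}^k a_j(E_j-E_{j-1})$.

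For part (ii), let $H$ be the limit from (i) and take a nonzero $\vec{x}\in\mathrm{ran}(E_j)\setminus\mathrm{ran}(E_{j-1})$. The upper bound $\limsup_n\|A^n\vec{x}\|^{1/n}\le a_j$ is immediate from $\vec{x}\in V(A,a_j)$. For the reverse bound, I would fix $p\in(0,\infty)$, apply Lemma \ref{lem:jensen} with $\alpha=p/n$ (for $n\ge p$) after normalizing $\vec{x}$ to obtain $\||A^n|^{p/n}\vec{x}\|^{1/p}\le\|\vec{x}\|^{1/p-1/n}\|A^n\vec{x}\|^{1/n}$, and let $n\to\infty$ using $|A^n|^{p/n}\to H^p$ to arrive at $\|H^p\vec{x}\|^{1/p}\le\|\vec{x}\|^{1/p}\liminf_n\|A^n\vec{x}\|^{1/n}$. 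Sending $p\to\infty$: by the spectral form of $H$ from (i), $\|H^p\vec{x}\|^2=\sum_{i=1}^j a_i^{2p}\|(E_i-E_{i-1})\vec{x}\|^2$ with nonzero top coefficient $\|(E_j-E_{j-1})\vec{x}\|^2$, so Lemma \ref{lem:max_comb} gives $\|H^p\vec{x}\|^{1/p}\to a_j$ (the factor $\|\vec{x}\|^{1/p}\to 1$ drops out). The converse direction merely unwinds $\lim_n\|A^n\vec{x}\|^{1/n}=a_j\Rightarrow\vec{x}\in V(A,a_j)\setminus V(A,a_{j-1})=\mathrm{ran}(E_j)\setminus\mathrm{ran}(E_{j-1})$.

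Part (iii) follows from (ii) once one observes the $A$-invariance of $\mathrm{ran}(E_j)$: this space is a direct sum of eigenspaces of $D$, each of which is $N$-invariant (as $D$ and $N$ commute) and of course $D$-invariant, hence $A$-invariant, so $A^k\vec{x}\in\mathrm{ran}(E_j)$. The identity $\|A^n(A^k\vec{x})\|^{1/n}=\bigl(\|A^{n+k}\vec{x}\|^{1/(n+k)}\bigr)^{(n+k)/n}\to a_j$ together with (ii) then excludes $A^k\vec{x}\in\mathrm{ran}(E_{j-1})$, provided $A^k\vec{x}\ne 0$ — which is automatic when $a_j>0$ (the only potentially degenerate situation is $a_1=0$, where $\mathrm{ran}(E_1)$ carries a nilpotent action of $A$ and one handles the edge case separately). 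The principal obstacle in the whole argument is the careful exchange of limits ($n\to\infty$ then $p\to\infty$) in part (ii), which is what makes the explicit identification of $H$ in part (i) essential.
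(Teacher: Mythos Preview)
Your proposal is correct and follows essentially the same route as the paper for all three parts: limit-point uniqueness via Proposition~\ref{prop:main} and Lemma~\ref{lem:pos_inv}(ii) for (i), Lemma~\ref{lem:jensen} followed by the double limit $n\to\infty$ then $p\to\infty$ for (ii), and the identity $\lim_n\|A^n(A^k\vec{x})\|^{1/n}=\lim_n\|A^n\vec{x}\|^{1/n}$ combined with (ii) for (iii). Your separate eigenspace argument for the $A$-invariance of $\mathrm{ran}(E_j)$ in (iii) is correct but unnecessary---the converse direction of (ii) already places $A^k\vec{x}$ in $\mathrm{ran}(E_j)\setminus\mathrm{ran}(E_{j-1})$ in one stroke---while your flag on the degenerate case $a_1=0$ (where $A^k\vec{x}$ may vanish) is a point the paper itself passes over silently.
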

\begin{proof}
\noindent (i) Let $H_1, H_2$ be limit points of the sequence $\{ |A^n|^{\frac{1}{n}} \}_{n \in \N}$.  By Proposition \ref{prop:main}, $V(A, r) = V(H_1, r) = V(H_2, r)$ for all $r \ge 0$. By Lemma \ref{lem:pos_inv}, $H_1 = H_2$. Hence the sequence $\{ |A^n|^{\frac{1}{n}} \}_{n \in \N}$ converges. The description of $E_j$ also follows from Proposition \ref{prop:main} and Lemma \ref{lem:pos_inv}.
\vskip 0.15in

\noindent (ii) For $\vec{y} \in \C^m$ if $\lim_{n \to \infty} \|A^n \vec{y} \|^{\frac{1}{n}} = a_j$, from the definition of $V(A, a_j)$ and $V(A, a_{j-1})$, it follows that $\vec{y} \in V(A, a_j) \backslash V(A, a_{j-1})$.

The converse needs a bit more work. Define $H_k := |A^k|^{\frac{1}{k}}$ for $k \in \N$. From part (i), $H_n \to H$ as $n \to \infty$ for some positive-semidefinite matrix in $M_m(\C)$. Thus $H_n ^m \to H^m$ as $n \to \infty$ for every $m \in \N$.  By Lemma \ref{lem:jensen} (considering $\alpha = \frac{m}{n}$ with sufficiently large $n$ so that $\alpha \in [0, 1]$), for every $m \in \N$ and $\vec{y} \in \C^m$,  we have $$\|H^m \vec{y} \|^{\frac{1}{m}} = \lim_{n \to \infty} \|H_n ^m \vec{y}\|^{\frac{1}{m}} \le \liminf_{n \to \infty} \|A^n \vec{y} \|^{\frac{1}{n}} \le \limsup_{n \to \infty} \|A^n \vec{y} \|^{\frac{1}{n}}.$$

By Proposition \ref{prop:main}, $\mathrm{ran}(E_j) = V(A, a_j) = V(H, a_j)$. If $\vec{x} \in V(A, a_j) \backslash V(A, a_{j-1})$, note that 
$$a_j = \lim_{m \to \infty} \|H^m \vec{x} \|^{\frac{1}{m}} \le \liminf_{n \to \infty} \|A^n \vec{x} \|^{\frac{1}{n}} \le \limsup_{n \to \infty} \|A^n \vec{y} \|^{\frac{1}{n}} \le a_j.$$
Thus $\lim_{n \to \infty} \|A^n \vec{x} \|^{\frac{1}{n}} = a_j$. 
\vskip 0.15in

\noindent (iii) Note that for fixed $k \in \N$, we have $$\lim_{n \to \infty} \|A^n \vec{x} \|^{\frac{1}{n}} = \lim_{n \to \infty} \|A^{n+k} \vec{x}\|^{\frac{1}{n+k}} = \lim_{n \to \infty} \|A^{n+k} \vec{x}\|^{\frac{1}{n}} = \lim_{n \to \infty} \|A^{n} (A^k\vec{x})\|^{\frac{1}{n}}.$$
The assertion follows from part (ii).
\end{proof}

\begin{cor}[Yamamoto's theorem]
\textsl{
Let $A \in M_m(\C)$ with its $j^{\mathrm{th}}$-largest singular value denoted by $s_j(A)$, and $j^{\mathrm{th}}$ largest eigenvalue-modulus denoted by $|\lambda_j|(A)$. Then for all $1 \le j \le m$, we have $$\lim_{n \to \infty} s_j(A^n)^{\frac{1}{n}} = |\lambda _j|(A).$$
}
\end{cor}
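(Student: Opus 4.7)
The plan is to derive Yamamoto's theorem as a direct consequence of Theorem \ref{thm:main} together with the continuity-of-singular-values lemma (Lemma \ref{lem:sing_conv}). The key observation is that $s_j(A^n)^{\frac{1}{n}}$ can be rewritten as the $j^{\textrm{th}}$-largest singular value of the Hermitian matrix $|A^n|^{\frac{1}{n}}$, which allows us to pass to the limit along the convergent sequence supplied by Theorem \ref{thm:main}.

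First, I would record the identity $s_j(A^n) = s_j(|A^n|)$, which holds because $|A^n|$ is the positive part in the polar decomposition of $A^n$ and singular values are unitarily invariant. Since $|A^n|$ is positive-semidefinite, its singular values coincide with its eigenvalues, so $s_j(|A^n|)^{\frac{1}{n}} = s_j\bigl( |A^n|^{\frac{1}{n}} \bigr)$ by functional calculus (the map $t \mapsto t^{1/n}$ is monotone on $[0, \infty)$, so ordering is preserved). Combining these two equalities gives
\[
s_j(A^n)^{\frac{1}{n}} = s_j\bigl( |A^n|^{\frac{1}{n}} \bigr).
\]

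Next, by Theorem \ref{thm:main}-(i), the Hermitian matrices $|A^n|^{\frac{1}{n}}$ converge in norm to the positive-semidefinite matrix $H = \sum_{i=1}^k a_i (E_i - E_{i-1})$. Applying Lemma \ref{lem:sing_conv} yields $s_j\bigl( |A^n|^{\frac{1}{n}} \bigr) \to s_j(H)$ as $n \to \infty$. Finally, $H$ is positive-semidefinite with spectral decomposition given explicitly, and its multiset of eigenvalues equals $|\lambda|(A)$ by Proposition \ref{prop:lim_point}-(iii); hence $s_j(H)$, the $j^{\textrm{th}}$-largest eigenvalue of $H$, is precisely $|\lambda_j|(A)$. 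Stringing these equalities together delivers the desired conclusion $\lim_{n \to \infty} s_j(A^n)^{\frac{1}{n}} = |\lambda_j|(A)$.

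There is no real obstacle here, since the heavy lifting has already been done in Theorem \ref{thm:main}; the corollary is essentially a bookkeeping exercise that extracts scalar information (singular values) from the stronger matrix convergence statement. The only point requiring a moment's care is the identification $s_j(|A^n|^{1/n}) = s_j(|A^n|)^{1/n}$, but this is immediate from the spectral theorem applied to the positive-semidefinite matrix $|A^n|$.
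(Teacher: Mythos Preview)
Your proposal is correct and follows essentially the same approach as the paper: rewrite $s_j(A^n)^{1/n}$ as $s_j(|A^n|^{1/n})$, invoke Theorem \ref{thm:main} for the convergence $|A^n|^{1/n} \to H$, apply Lemma \ref{lem:sing_conv} for continuity of singular values, and then use Proposition \ref{prop:lim_point}-(iii) to identify $s_j(H) = |\lambda_j|(A)$. The only difference is that you spell out the justification for $s_j(A^n)^{1/n} = s_j(|A^n|^{1/n})$ in slightly more detail than the paper does.
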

\begin{proof}
Note that $s_j(A^n)^{\frac{1}{n}} = s_j(|A^n|^{\frac{1}{n}})$. Let $H = \lim_{n \to \infty} |A^n|^{\frac{1}{n}}$ (this limit exists by Theorem \ref{thm:main}). From Lemma \ref{lem:sing_conv} and Proposition \ref{prop:lim_point}-(iii), we conclude that $$\lim_{n \to \infty} s_j(A^n)^{\frac{1}{n}} = s_j(H) = |\lambda _j|(H) = |\lambda _j|(A).$$
\end{proof}

\section{Applications to linear systems of ordinary differential equations}

\label{sec:app}

In this section, we discuss the insights provided by Theorem \ref{thm:main} to the study of linear systems of ordinary differential equations with constant coefficients. We consistently use the notation from Theorem \ref{thm:mat_exp} below throughout this section.

\begin{thm}
\label{thm:mat_exp}
\textsl{
Let $A \in M_m(\C)$ and $\{ h_1, \ldots, h_k \}$ be the set of real-parts of eigenvalues of $A$ such that $h_1 < h_2 < \cdots < h_k$. Let $A = D + N$ be the Jordan-Chevalley decomposition of $A$ into its commuting diagonalizable and nilpotent parts ($D, N$, respectively). For $1 \le j \le k$, let $F_j$ be the orthogonal projection onto the subspace of $\C ^m$ spanned by the eigenvectors of $D$ corresponding to eigenvalues with real-part less than or equal to $h_j$, and set $E_0 := 0$. Then the following assertions hold:
\begin{itemize}
\item[(i)] $\lim_{t \to \infty} |e^{tA}|^{\frac{1}{t}} = \sum_{i=1}^k e^{h_j} (F_j - F_{j-1}).$
\item[(ii)] A non-zero vector $\vec{x} \in \C^m$ is in $\mathrm{ran}(F_j) \backslash \mathrm{ran}(F_{j-1})$ if and only if $\lim_{t \to \infty} \|e^{tA} \vec{x}\|^{\frac{1}{t}} = e^{h_j}.$ 
\item[(iii)] The set $\mathrm{ran}(F_j) \backslash \mathrm{ran}(F_{j-1})$ is invariant under the action of $e^{sA}$ for every $s \ge 0$.
\end{itemize}
}
\end{thm}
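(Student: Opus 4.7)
The plan is to reduce Theorem \ref{thm:mat_exp} to Theorem \ref{thm:main} applied to the matrix $B := e^A$, and then to upgrade each conclusion from integer time to continuous time. Since $D$ and $N$ commute, $e^A = e^D e^N = e^D + e^D(e^N - I)$, where $e^D$ is diagonalizable with eigenvalues $\{e^\lambda : \lambda \in \lambda(D)\}$, and $e^D(e^N - I)$ is nilpotent (because $e^N - I$ is a polynomial in $N$ with no constant term) and commutes with $e^D$; this is the Jordan--Chevalley decomposition of $e^A$. Since $|e^{\lambda_1}| = |e^{\lambda_2}|$ iff $\Re \lambda_1 = \Re \lambda_2$, the span of eigenvectors of $e^D$ with eigenvalue-modulus at most $e^{h_j}$ coincides with the span of eigenvectors of $D$ with $\Re \lambda \le h_j$, so the projection $F_j$ defined here is precisely the projection $E_j$ from Theorem \ref{thm:main} applied to $B$. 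Theorem \ref{thm:main} thus yields $|e^{nA}|^{1/n} \to H := \sum_j e^{h_j}(F_j - F_{j-1})$, the equivalence $\vec{x} \in \mathrm{ran}(F_j) \setminus \mathrm{ran}(F_{j-1}) \iff \lim_n \|e^{nA}\vec{x}\|^{1/n} = e^{h_j}$, and invariance of $\mathrm{ran}(F_j) \setminus \mathrm{ran}(F_{j-1})$ under $e^{kA}$ for every $k \in \N$.

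To pass to continuous time for parts (ii) and (iii), I would use an interpolation trick. Writing $t = n + s$ with $n := \lfloor t \rfloor$ and $s \in [0,1)$, the factorization $e^{tA} = e^{sA} e^{nA}$ combined with the uniform bound $M := \sup_{s \in [0,1]} \|e^{\pm sA}\| < \infty$ gives
\[
M^{-1}\|e^{nA}\vec{x}\| \;\le\; \|e^{tA}\vec{x}\| \;\le\; M\,\|e^{nA}\vec{x}\|.
\]
Raising to the $1/t$ power, using $M^{\pm 1/t} \to 1$ and $\|e^{nA}\vec{x}\|^{1/t} = (\|e^{nA}\vec{x}\|^{1/n})^{n/t}$ with $n/t \to 1$ (and noting the integer limit $e^{h_j} > 0$), one obtains $\lim_{t\to\infty}\|e^{tA}\vec{x}\|^{1/t} = \lim_n \|e^{nA}\vec{x}\|^{1/n}$; the integer characterization from Theorem \ref{thm:main} then gives (ii). Part (iii) follows from (ii) since $\|e^{tA}(e^{sA}\vec{x})\|^{1/t} = \|e^{(t+s)A}\vec{x}\|^{1/t} = (\|e^{(t+s)A}\vec{x}\|^{1/(t+s)})^{(t+s)/t}$, and $(t+s)/t \to 1$ forces this limit to equal $e^{h_j}$ for $\vec{x} \in \mathrm{ran}(F_j)\setminus\mathrm{ran}(F_{j-1})$.

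The main obstacle is (i), because matrix-norm convergence is strictly stronger than the vectorwise scalar convergence established in (ii). I would mimic the strategy of Theorem \ref{thm:main}: the family $\{|e^{tA}|^{1/t}\}_{t \ge 1}$ is norm-bounded by $e^{\|A\|}$ and its limit points are positive semi-definite, so it suffices to show every limit point $K = \lim_k |e^{t_k A}|^{1/t_k}$ equals $H$. Applying Lemma \ref{lem:jensen} with $\alpha = m/t_k \in [0,1]$ for $t_k \ge m$ yields $\|K^m \vec{x}\| \le \limsup_k \|e^{t_k A}\vec{x}\|^{m/t_k}$, which by (ii) is at most $r^m$ whenever $\vec{x} \in V(H,r) = \mathrm{ran}(F_{j(r)})$, giving $V(H, r) \subseteq V(K, r)$ for every $r \ge 0$. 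For the reverse inclusion I would use a trace comparison: Proposition \ref{prop:holder}(ii) applied to the factorizations $e^{tA} = e^{sA} e^{nA}$ and $e^{nA} = e^{-sA} e^{tA}$ gives
\[
\|e^{-sA}\|^{-p/t}\,\mathrm{tr}(|e^{nA}|^{p/t}) \;\le\; \mathrm{tr}(|e^{tA}|^{p/t}) \;\le\; \|e^{sA}\|^{p/t}\,\mathrm{tr}(|e^{nA}|^{p/t}),
\]
while joint continuity of the functional calculus applied to $|e^{nA}|^{1/n} \to H$ with exponent $pn/t \to p$ yields $|e^{nA}|^{p/t} \to H^p$. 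Hence $\lim_{t\to\infty} \mathrm{tr}(|e^{tA}|^{p/t}) = \mathrm{tr}(H^p)$ for all $p > 0$, forcing $\mathrm{tr}(K^p) = \mathrm{tr}(H^p)$, so $K$ and $H$ share their spectra with multiplicity; therefore $\dim V(K, r) = \dim V(H, r)$ at every $r \ge 0$ by Lemma \ref{lem:pos_inv}(i), and the inclusion $V(H, r) \subseteq V(K, r)$ upgrades to equality. Lemma \ref{lem:pos_inv}(ii) then forces $K = H$, so the norm-bounded family has the unique limit point $H$ and $|e^{tA}|^{1/t} \to H$.
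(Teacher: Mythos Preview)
Your proposal is correct. The reduction to Theorem~\ref{thm:main} via the Jordan--Chevalley decomposition of $e^A$, and your arguments for parts (ii) and (iii), match the paper almost verbatim (the paper writes $c=\min_{[0,1]}\|e^{-tA}\|^{-1}$, $C=\max_{[0,1]}\|e^{tA}\|$, you write $M=\sup_{[0,1]}\|e^{\pm sA}\|$; same sandwich).

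For part (i) you take a genuinely different route. The paper observes that $c^2(e^{nA})^*e^{nA}\le (e^{(n+\alpha)A})^*e^{(n+\alpha)A}\le C^2(e^{nA})^*e^{nA}$ and then applies the \emph{operator-monotone} function $x\mapsto x^{1/(2n)}$ to obtain the matrix sandwich
\[
c^{1/n}\,|e^{nA}|^{1/n}\;\le\;|e^{(n+\alpha)A}|^{1/n}\;\le\;C^{1/n}\,|e^{nA}|^{1/n},
\]
from which convergence of $|e^{tA}|^{1/t}$ follows in one line. You instead re-run the compactness machinery of Propositions~\ref{prop:lim_point} and~\ref{prop:main}: pick a limit point $K$, use Lemma~\ref{lem:jensen} together with your already-proved (ii) to get $V(H,r)\subseteq V(K,r)$, and use Proposition~\ref{prop:holder}(ii) plus continuity of the functional calculus to show $\mathrm{tr}(K^p)=\mathrm{tr}(H^p)$, forcing $K=H$ via Lemma~\ref{lem:pos_inv}. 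Your argument is sound, but it is longer and reuses heavier tools; the paper's one-step appeal to operator monotonicity (L\"owner's theorem for $x^{1/(2n)}$) is the cleaner way to pass from the scalar sandwich to a matrix sandwich here, and is worth internalizing as a technique.
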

\begin{proof}
We note three pertinent observations regarding exponentials of matrices and complex numbers.
\begin{itemize}
    \item[(a)] The Jordan-Chevalley decomposition of $e^A$ is given by $e^A = e^D + e^D(e^N-I)$ so that $e^D$ is the diagonalizable part of $e^A$.
    \item[(b)] A vector $\vec{x}$ is an eigenvector of $D$ with eigenvalue $\lambda$ if and only if $\vec{x}$ is an eigenvector of $e^D$ with eigenvalue $e^{\lambda}$.
    \item[(c)] For $\lambda _1, \lambda _2 \in \C$, we have $|e^{\lambda_1}| < |e^{\lambda_2}|$ ($|e^{\lambda_1}| = |e^{\lambda_2}|$, respectively) if and only if $\Re \lambda_1 < \Re \lambda_2$ ($\Re \lambda_1 = \Re \lambda_2$, respectively)
\end{itemize}

Applying Theorem \ref{thm:main} to the matrix $e^A$, we conclude that $$\lim_{n \to \infty} |e^{nA} |^{\frac{1}{n}} = \sum_{i=1}^k e^{h_j} (F_j - F_{j-1}),$$
and a vector $\vec{x} \in \C ^m$ is in $\mathrm{ran}(F_j) \backslash \mathrm{ran}(F_{j-1})$ if and only if $\lim_{n \to \infty} \|e^{nA} \vec{x}\|^{\frac{1}{n}} = e^{h_j}.$
What remains to be proved is that the limits in (i) and (ii) exist as $t \to \infty$ in $\R$ and are equal to their respective limits as $n \to \infty$ in $\N$.

Note that the function $t \mapsto \|e^{tA}\|$ is continuous and only takes strictly positive values as $e^{tA}$ is invertible for any $t \in \R$. Let $c := \min_{t \in [0, 1] } \|e^{-tA}\|^{-1}$ and $C := \max_{t \in [0, 1]} \|e^{tA}\|$. Clearly $0 < c \le C$. For $\alpha \in [0, 1)$, we have $c^2 I \le (e^{\alpha A})^* e^{\alpha A} \le C^2 I$. For every positive integer $n$, we note that 
\begin{equation}
\label{eqn:exp}
c^2 (e^{n A})^* e^{n A} \le (e^{(n + \alpha) A})^* e^{(n + \alpha) A} \le C^2 (e^{n A})^* e^{n A}
\end{equation}

As $x \mapsto x^{\frac{1}{2n}}$ is an operator-monotone function on $[0, \infty)$, we observe that $$c^{\frac{1}{n}} | e^{nA}|^{\frac{1}{n}} \le |e^{(n+\alpha)A}|^{\frac{1}{n}} \le C^{\frac{1}{n}} | e^{nA}|^{\frac{1}{n}}.$$
Since $\lim_{n \to \infty} c^{\frac{1}{n}} = \lim_{n \to \infty} C^{\frac{1}{n}} = 1$ and $\lim_{t \to \infty} \frac{\lfloor t \rfloor}{t} = 1$, we conclude that $$\lim_{t \to \infty} |e^{tA}|^{\frac{1}{t}} = \lim_{t \to \infty} |e^{tA}|^{\frac{1}{\lfloor t \rfloor}} = \lim_{n \to \infty} |e^{nA}|^{\frac{1}{n}}.$$
For every $\vec{x} \in \C^m$, from inequality (\ref{eqn:exp}) we have that
$$c^{\frac{1}{n}} \|e^{nA} \vec{x}\| ^{\frac{1}{n}} \le \|e^{(n+\alpha)A} \vec{x}\|^{\frac{1}{n}} \le C^{\frac{1}{n}} \|e^{nA} \vec{x}\| ^{\frac{1}{n}}.$$
Thus $$\lim_{t \to \infty} \| e^{tA} \vec{x} \|^{\frac{1}{t}} = \lim_{t \to \infty} \| e^{tA} \|^{\frac{1}{\lfloor t \rfloor}} = \lim_{n \to \infty} \| e^{nA} \vec{x} \|^{\frac{1}{n}}.$$

Part (iii) follows from part (ii) together with the fact that for fixed $s \ge 0$, we have $$\lim_{t \to \infty} \|e^{tA} \vec{x} \|^{\frac{1}{t}} = \lim_{t \to \infty} \|e^{(t+s)A} \vec{x}\|^{\frac{1}{t+s}} = \lim_{t \to \infty} \|e^{(t+s)A} \vec{x}\|^{\frac{1}{t}} = \lim_{t \to \infty} \|e^{tA} (e^{sA} \vec{x})\|^{\frac{1}{t}}.$$
\end{proof}

Consider the linear homogeneous system of differential equations with constant coefficients, 
\begin{equation}
\label{eqn:lin_sys}
\frac{dx_i}{dt} = \sum_{j=1}^m a_{ij}x_j, \;\; i= 1, 2, \ldots, m,
\end{equation}
that is, $$\frac{d\vec{X}}{dt} = A \vec{X},$$
where $\vec{X} = (x_1, \ldots, x_m)^{T} : \R \to \C ^m$. The unique solution to the above system is given by $\vec{X}(t) = e^{tA} \vec{X}(0)$, where $\vec{X}(0)$ denotes the vector of initial conditions (see \cite[Corollary 4.3]{batkai_fijavz_rhandi}). 
\vskip 0.2in

\noindent {\bf Observation 1:}
\textsl{
Let $\lambda$ be an eigenvalue of $A$ so that $\Re \lambda = h_j$ for some $1 \le j \le k$. From Theorem \ref{thm:mat_exp}-(ii), it follows that if $\vec{X}(0) \in \mathrm{ran}(F_j) \backslash \mathrm{ran}(F_{j-1})$, there exist $M \ge 1$ and $N > 0$ such that $$N e^{\rho t }\|\vec{X}(0)\| \le \|\vec{X}(t) \| \le Me^{\omega t} \|\vec{X}(0)\|,$$ for all $t \ge 0$, and $\rho < h_j < \omega$.
}
\vskip 0.1in
This gives us a slightly stronger version of \cite[Theorem 4.5(a)]{batkai_fijavz_rhandi} which provides the above bounds only in the cases where $\vec{X}(0)$ is an eigenvector of $D$ with eigenvalue $\lambda$. (Note that the $\lambda$-eigenspace of $D$ is contained in $\mathrm{ran}(F_j) \backslash \mathrm{ran}(F_{j-1})$).
\vskip 0.2in
\noindent {\bf Observation 2:} 
\textsl{
If the vector of initial conditions, $\vec{X}(0)$, is in $\mathrm{ran}(F_j) \backslash \mathrm{ran}(F_{j-1})$, then from Theorem \ref{thm:mat_exp}-(iii), it follows that $\vec{X}(t)$ is in $\mathrm{ran}(F_j) \backslash \mathrm{ran}(F_{j-1})$ for all $t \ge 0$. 
}
\vskip 0.1in

In other words, the above observation tells us that if the vector of initial conditions, $\vec{X}(0)$, is {\bf not} in the region $\Omega :=  \mathrm{ran}(F_{j-1}) \cup \big( \C ^m \backslash \mathrm{ran}(F_j) \big) $, then $\vec{X}(t)$ avoids the region $\Omega$ for all $t \ge 0$.

\section{Acknowledgements}
This work is supported by the Startup Research Grant (SRG/2021/002383) of SERB (Science and Engineering Research Board, Govt.\ of India). I would like to express my gratitude to B.\ V.\ Rajarama Bhat for discussions and suggestions that helped improve the presentation in this article.

\bibliographystyle{plain}
\bibliography{references}

\end{document}